\newtheorem{theorem}{Theorem}[section]
\newtheorem{remark}{Remark}[section]
\newtheorem{corollary}{Corollary}[section]
\newtheorem{definition}{Definition}[section]
\newtheorem{example}{Example}[section]
\numberwithin{equation}{section}
\def\argmax{\operatorname{argmax}}
\def\argmin{\operatorname{argmin}}
\def\e{\operatorname{e}}
\def\@author#1{\g@addto@macro\elsauthors{\normalsize%
    \def\baselinestretch{1}%
    \upshape\authorsep#1\unskip\textsuperscript{%
      \ifx\@fnmark\@empty\else\unskip\sep\@fnmark\let\sep=,\fi
      \ifx\@corref\@empty\else\unskip\sep\@corref\let\sep=,\fi
      }%
    \def\authorsep{\unskip,\space}%
    \global\let\@fnmark\@empty
    \global\let\@corref\@empty  %% Added
    \global\let\sep\@empty}%
    \@eadauthor={#1}
}
\patchcmd{\tnotemark}{\ding{73}}{\dag}{}{\@latex@error{Failed to path \string\tnotemarkspace for \string\ding{73}}}
 \patchcmd{\tnotemark}{\ding{73}\ding{73}}{\dag\dag}{}{\@latex@error{Failed to path \string\tnotemark\space for \string\ding{73}\string\ding{73}}}
  \patchcmd{\tnotetext}{\ding{73}}{\dag}{}{\@latex@error{Failed to path \string\tnotetext\space for \string\ding{73}}}
    \patchcmd{\tnotetext}{\ding{73}\ding{73}}{\dag\dag}{}{\@latex@error{Failed to path \string\tnotetext\space for \string\ding{73}\string\ding{73}}}
\begin{document}

\begin{frontmatter}

\title{Moments of order statistics   from DNID discrete random variables with application in reliability}

%\runningtitle{{Moments of discrete order statistics}

\author[ja]{Anna Dembi\'nska\corref{cor1}}
\ead{dembinsk@mini.pw.edu.pl}
 \cortext[cor1]{Corresponding author.}
 
 \author[Agnieszka]{Agnieszka Goroncy}
\ead{gemini@mat.umk.pl}

\address[ja]{Faculty of Mathematics and Information Science\\
Warsaw University of Technology\\
ul. Koszykowa 75, 00-662 Warsaw, Poland}

\address[Agnieszka]{Faculty of Mathematics and Computer Science\\
Nicolaus Copernicus University\\
ul. Chopina 12/18, 87-100 Toru\'n, Poland}

\begin{abstract}
In this paper, we present methods of obtaining single moments of order statistics arising  from posibly dependent and non-identically distributed discrete random variables. We derive exact and approximate formulas  convenient for numerical evaluation of such moments.  To demonstrate their use, we  tabulate means and second moments of order statistics from random vectors having some typical discrete distributions with selected parameter values.
Next, we apply our results in reliability theory. We establish moments of  discrete  lifetimes of  coherent systems consisting of heterogeneous and  not necessarily independently working components. In particular, we obtain expression for expectations and variances of lifetimes of such systems. We give some illustrative examples.

\end{abstract}

\begin{keyword}
Discrete distributions  \sep Moments \sep Order statistics \sep Dependent non-identically distributed random variables  \sep Coherent systems  \sep Reliability
\end{keyword}

\end{frontmatter}

\section{Introduction}
\label{section1}
Let  $X_1,\ldots,X_n$ be possibly dependent and not necessarily identically distributed (DNID) discrete random variables (RV's) and by $X_{1:n},\ldots,X_{n:n}$ denote the corresponding order statistics. The  aim of this paper is to derive expressions for single  moments of $X_{r:n}$, $1\leq r\leq n$, and to use these results to obtain moments of lifetimes of 
 coherent systems consisting of components having discrete operating times.

The study of order statistics from discrete populations and their moments has a long history. As described in a review paper by Nagaraja \cite{N92} the interest in this topic arose from practical problems like  statistical testing for homogeneity, and ranking populations and selection of the best one. Tank and Eryilmaz \cite{TE15} pointed out  its  utility in   statistical control of quality and start-up demonstration test. Eisenberg \cite{E08} motivated computing the mean of the maximum of independent and identically (IID) geometric RV's by a statistical problem in bioinformatics. Another field where discrete order statistics find application is the reliability theory. They are  used in analysis of technical systems when component lifetimes have discrete probability distributions. Discrete component lifetimes appear naturally when the system is monitored in discrete time or when it performs a task repetitively and its components have certain probabilities of failure upon each cycle, see, e.g.  \cite[p 17]{BP96}. Some other examples where discrete lifetimes are involved are discrete-time shock models; for details see \cite{E16szok} and the references therein.

An important class of systems studied in the reliability theory is one consisting of so called coherent systems i.e. of systems  such that  every their component is relevant and  replacing a failed component by a working one cannot cause a working system to fail. There is a vast literature on coherent systems yet it is restricted mainly to the case when component lifetimes are jointly continuously distributed. A few results are known to be valid also in the most general case of arbitrary distributions of component lifetimes (see for instance \cite{NRS07}, \cite{NSBB08}, \cite{MR14},  \cite{BE15}, \cite{EKT16}) and these  can be applied in particular when components lifetimes are discrete. Results holding specifically for the discrete case are sparse and, to the best of our knowledge, concern only a~special subclass of coherent systems, namely $k$-out-of-$n$ systems (\cite{W62}, \cite{TE15}, \cite{D18}, \cite{DD19}); for the definition of $k$-out-of-$n$ systems see the beginning of Section~\ref{section4}. Consequently, there are still many open problems related to reliability properties of coherent systems with discrete lifetimes of components. This work is intended to solve one of such problems and to provide methods, convenient for numerical computations, of finding means, variances and other moments of times to failures of coherent systems operating in discrete time. For this purpose we first concentrate on single moments of discrete order statistics and derive exact and approximate formulas that allow their computation using software.

The paper is organized as follows. In Section \ref{section2}, we establish  expressions for single moments of order statistics from DNID discrete RV's. In the case when these expressions involve infinite summations, we propose a~procedure that can be used for numerical computations and leads to approximate results with an error not exceeding a given value. In particular, we present explicit formulas for obtaining desired approximations when the marginal distributions of the parent RV's are Poisson or negative binomial. In Section  \ref{section3}, we look at samples with the multivariate geometric distribution  introduced in \cite{EM73} to describe dependent  lifetimes of  units exposed to common shocks. We derive closed-form expressions which enable numerical computation of exact values of moments of the corresponding order statistics. Section \ref{section4} is devoted to applications of the results given in two preceding  sections in the reliability theory to find moments of the lifetime  of a coherent system operating in discrete time.  We provide general exact and approximate formulas for these  moments and evaluate them for the bridge system with some selected joint distributions of component lifetimes. The results presented in the paper are illustrated in numerous tables and some figures.

%%%%%%%%%%%%%%%%%%%%%%%%%%%%%%%%%%%%%%%%%%%%%%%%%%%%%%
%%%%%%%%%%%%%%%%%%%%%%%%%%%%%%%%%%%%%%%%%%%%%%%%%%%%%%
\section{Moments of order statistics from DNID discrete RV's}
\label{section2}

In \cite{DD18}  expressions for moments of order statistics arising from independent and not necessarily identically distributed (INID) discrete RV's were derived. We begin this section with a generalization of this result to the case when the underlying RV's are DNID.

\begin{theorem}
\label{th1}
Let $X_1,X_2,\ldots,X_n$ be DNID RV's taking values in the set of non-negative integers. Then, for $1\leq r\leq n$ and $p=1,2,\ldots$, we have
\begin{eqnarray}
EX^p_{r:n}&=&\sum\limits_{m=0}^\infty \big((m+1)^p-m^p\big)  \sum_{s=0}^{r-1}  \sum_{(j_1,\ldots,j_n)\in\mathcal{P}_s}   \nonumber \\
&&
P\left(\bigg(\bigcap\limits_{l=1}^s\left\{X_{j_l}\leq m\right\}\bigg)\cap \bigg(\bigcap\limits_{l=s+1}^n\left\{X_{j_l}>m\right\}\bigg)\right),  \label{th1t1}
\end{eqnarray}
or, equivalently,
\begin{eqnarray}
EX^p_{r:n}&= &\sum_{m=0}^\infty \big((m+1)^p-m^p\big)
\left\{1-\sum\limits_{s=r}^{n}\sum_{(j_1,\ldots,j_n)\in\mathcal{P}_s} \right.\nonumber \\
&& \left.P\left(\bigg(\bigcap\limits_{l=1}^s\left\{X_{j_l}\leq m\right\}\bigg)\cap \bigg(\bigcap\limits_{l=s+1}^n\left\{X_{j_l}>m\right\}\bigg)\right)\right\},  \label{th1t2}
\end{eqnarray}
where $\mathcal{P}_s$ denotes the subset of permutations $(j_1,j_2,\ldots,j_n)$ of $(1,2,\ldots,n)$ satisfying
$$j_1<j_2<\cdots<j_n \hbox{ and } j_{s+1}<j_{s+2}<\cdots<j_n,$$
and it is understood that $\mathcal{P}_0=\mathcal{P}_n=\{(1,2,\ldots,n)\}$.
\end{theorem}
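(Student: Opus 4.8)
The plan is to combine a standard summation-by-parts formula for integer-valued moments with a combinatorial decomposition of the survival function of $X_{r:n}$. First I would record the elementary telescoping identity valid for every non-negative integer $y$, namely $y^p=\sum_{m=0}^{y-1}\big((m+1)^p-m^p\big)$, and rewrite its upper limit as an indicator to obtain $y^p=\sum_{m=0}^{\infty}\big((m+1)^p-m^p\big)\,\mathbbm{1}\{y>m\}$. Substituting $y=X_{r:n}$, taking expectations, and interchanging expectation with the series by Tonelli's theorem (all summands being non-negative) yields
\begin{equation*}
EX_{r:n}^p=\sum_{m=0}^{\infty}\big((m+1)^p-m^p\big)\,P(X_{r:n}>m).
\end{equation*}
This reduces the problem to expressing $P(X_{r:n}>m)$ through the joint distribution of $X_1,\ldots,X_n$.

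The second step is the combinatorial heart. For fixed $m$ let $N_m$ be the number of indices $i$ with $X_i\leq m$. The order-statistic event and the counting event coincide, $\{X_{r:n}>m\}=\{N_m\leq r-1\}$, since $X_{r:n}\leq m$ precisely when at least $r$ of the variables do not exceed $m$; hence $P(X_{r:n}>m)=\sum_{s=0}^{r-1}P(N_m=s)$. Each term $P(N_m=s)$ is then split according to which $s$ variables fall at or below $m$, and I would verify that $\mathcal{P}_s$ parametrizes exactly these choices: the monotonicity constraints force the ``small'' indices $j_1<\cdots<j_s$ and the ``large'' indices $j_{s+1}<\cdots<j_n$ each to appear in increasing order, so every $s$-subset of $\{1,\ldots,n\}$ together with its complement is listed once and only once. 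This gives
\begin{equation*}
P(N_m=s)=\sum_{(j_1,\ldots,j_n)\in\mathcal{P}_s}P\left(\big(\bigcap_{l=1}^{s}\{X_{j_l}\leq m\}\big)\cap\big(\bigcap_{l=s+1}^{n}\{X_{j_l}>m\}\big)\right),
\end{equation*}
with the degenerate conventions $\mathcal{P}_0=\mathcal{P}_n=\{(1,2,\ldots,n)\}$ covering $s=0$ (no small index) and $s=n$ (no large index). Summing over $s=0,\ldots,r-1$ and inserting into the survival-function formula produces (\ref{th1t1}).

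To derive the equivalent form (\ref{th1t2}) I would instead write $P(X_{r:n}>m)=1-P(N_m\geq r)=1-\sum_{s=r}^{n}P(N_m=s)$ and expand each $P(N_m=s)$ by the same decomposition. Because $\sum_{s=0}^{n}P(N_m=s)=1$, the two representations of $P(X_{r:n}>m)$ agree, which simultaneously establishes (\ref{th1t2}) and its equivalence with (\ref{th1t1}).

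I expect the main obstacle to be bookkeeping rather than analysis: checking that $\mathcal{P}_s$ enumerates the relevant partitions of the index set without repetition, that both monotonicity conditions (not just one) are needed to guarantee uniqueness, and that the boundary cases $s=0$ and $s=n$ are handled correctly by the stated conventions. The analytic ingredients---the telescoping identity and the Tonelli interchange---are routine, the latter being immediate since every summand is non-negative. This argument is the natural generalization of the INID computation in \cite{DD18}, the only change being that the joint probabilities no longer factorize into products of marginals.
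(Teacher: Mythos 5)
Your proposal is correct and follows essentially the same route as the paper: the identity $EX_{r:n}^p=\sum_{m=0}^{\infty}\big((m+1)^p-m^p\big)P(X_{r:n}>m)$ (which the paper cites as a known fact and you justify directly via telescoping and Tonelli), followed by decomposing $\{X_{r:n}>m\}$ according to the number of variables at or below $m$ --- your events $\{N_m=s\}$ are exactly the paper's events $A_s$, enumerated over $\mathcal{P}_s$, with the complementary sum $s=r,\ldots,n$ giving the second form. No gaps; the only difference is presentational.
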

\begin{proof}
We use the following well known fact:  if a discrete RV $X$ has a~support consisting only  of non-negative integers, then
\begin{equation}
\label{fact1_formula}
EX^p=p\int\limits_0^\infty x^{p-1}P(X>x)dx=\sum_{m=0}^\infty \big((m+1)^p-m^p\big)P(X>m), \quad p=1,2,\ldots.
\end{equation}
This fact implies that  if $X_1,X_2,\ldots,X_n$ are DNID RV's taking values in the set of non-negative integers, then, for $1\leq r\leq n$ and $p=1,2,\ldots$,
\begin{equation}
\label{th1d1}
EX_{r:n}^p=\sum_{m=0}^\infty \big((m+1)^p-m^p\big)P\left(X_{r:n}>m\right).
\end{equation}
Thus we are reduced to finding $P\left(X_{r:n}>m\right)$. For this purpose observe that
$$\{X_{r:n}>m\}=\bigcup_{s=0}^{r-1}A_s \; \hbox{ and }  \; \{X_{r:n}\leq m\}=\bigcup_{s=r}^{n}A_s,$$
where the events  $A_s$, $s=0,1,\ldots,n$, given by
$$A_s=\left\{\textrm{exactly }s\textrm{ of }X_i\textrm{ are }\leq m\textrm{ and the remaining }n-s\textrm{ of }X_i\textrm{ are }>m \right\}$$
are pairwise disjoint. Therefore
\begin{equation}
\label{th1d2}
P(X_{r:n}>m)=\sum_{s=0}^{r-1}P\left(A_s\right),
\end{equation}
or equivalently,
\begin{equation}
\label{th1d3}
P(X_{r:n}>m)=1-P(X_{r:n}\leq m)=1-\sum_{s=r}^{n}P\left(A_s\right).
\end{equation}
But, for $s=0,1,\ldots,n$,
\begin{equation}
\label{th1d4}
P\left(A_s\right)=\sum_{(j_1,\ldots,j_n)\in\mathcal{P}_s}P\left(\bigg(\bigcap\limits_{l=1}^s\left\{X_{j_l}\leq m\right\}\bigg)\cap \bigg(\bigcap\limits_{l=s+1}^n\left\{X_{j_l}>m\right\}\bigg)\right).
\end{equation}
Combining (\ref{th1d1}) with  (\ref{th1d2})-(\ref{th1d4}) yields  (\ref{th1t1}) and  (\ref{th1t2}). The proof is complete.
\end{proof}

\begin{remark}
\label{rem1}

{\bf (1)} Under the additional assumption that $X_1,X_2,\ldots,X_n$  are independent and $X_i$ has cumulative distribution function $F_i(x)=P(X_i\leq x)$ and survival function $\bar{F}_i(x)=P(X_i> x)$, $i=1,\ldots,n$, formulas (\ref{th1t1}) and  (\ref{th1t2}) reduce to 
$$EX^p_{r:n}=\sum\limits_{m=0}^\infty \big((m+1)^p-m^p\big)\sum_{s=0}^{r-1}
\sum_{(j_1,\ldots,j_n)\in\mathcal{P}_s} \bigg(\prod_{l=1}^sF_{j_l}(m)\bigg) \bigg(\prod_{l=s+1}^n\bar{F}_{j_l}(m)\bigg)
$$
and
$$EX^p_{r:n}= \sum_{m=0}^\infty \big((m+1)^p-m^p\big)
\left\{1-\sum\limits_{s=r}^{n}\sum_{(j_1,\ldots,j_n)\in\mathcal{P}_s}\bigg(\prod_{l=1}^sF_{j_l}(m)\bigg) \bigg(\prod_{l=s+1}^n\bar{F}_{j_l}(m)\bigg)\right\},
$$
respectively. Thus we recover  \citep[Theorem 2.1]{DD18}.

\noindent
{\bf (2)} Let $\mathcal{P}$ denote the set of all permutations $(j_1,j_2,\ldots,j_n)$ of $(1,2,\ldots,n)$. If in Theorem \ref{th1} we additionally require that the random vector $(X_1,X_2,\ldots,X_n)$ is exchangeable, that is, for $(j_1,j_2,\ldots,j_n)\in\mathcal{P}$,
$(X_{j_1},X_{j_2},\ldots,X_{j_n})$ has the same distribution as $(X_1,X_2,\ldots,X_n)$,
then, for all $(j_1,j_2,\ldots,j_n)\in\mathcal{P}$,
\begin{eqnarray*}
P\left(\bigg(\bigcap\limits_{l=1}^s\left\{X_{j_l}\leq m\right\}\bigg)\cap \bigg(\bigcap\limits_{l=s+1}^n\left\{X_{j_l}>m\right\}\bigg)\right) && \\
=P\left(\bigg(\bigcap\limits_{l=1}^s\left\{X_{l}\leq m\right\}\bigg)\cap \bigg(\bigcap\limits_{l=s+1}^n\left\{X_{l}>m\right\}\bigg)\right). &&
\end{eqnarray*}
Since there are exactly ${n \choose s}$ elements of $\mathcal{P}_s$,  (\ref{th1t1}) and  (\ref{th1t2}) simplify to 
\begin{eqnarray*}
 EX^p_{r:n} &=& \sum\limits_{m=0}^\infty \big((m+1)^p-m^p\big)\\
&& \times\sum_{s=0}^{r-1} {n\choose s}
P\left(\bigg(\bigcap\limits_{l=1}^s\left\{X_{l}\leq m\right\}\bigg)\cap \bigg(\bigcap\limits_{l=s+1}^n\left\{X_{l}>m\right\}\bigg)\right), 
\end{eqnarray*}
and

\begin{eqnarray*}
EX^p_{r:n}&=& \sum_{m=0}^\infty \big((m+1)^p-m^p\big)  \\
&& \times \left\{1-\sum\limits_{s=r}^{n}
{n\choose s}
P\left(\bigg(\bigcap\limits_{l=1}^s\left\{X_{l}\leq m\right\}\bigg)\cap \bigg(\bigcap\limits_{l=s+1}^n\left\{X_{l}>m\right\}\bigg)\right)\right\},
\end{eqnarray*}
respectively.

\noindent
{\bf (3)}
If $r<\frac{n+1}{2}$ then it is better to use (\ref{th1t1}) rather than  (\ref{th1t2}), because it requires a smaller number of arithmetic operation. If in turn $r>\frac{n+1}{2}$ then  (\ref{th1t2}) leads to shorter time of computation than  (\ref{th1t1}). 
 In particular, applying  (\ref{th1t1}) with $r=1$ and  (\ref{th1t2}) with $r=n$ gives, for $p=1,2,\ldots$,
\begin{equation}
\label{firstOS}
EX^p_{1:n}= \sum\limits_{m=0}^\infty \big((m+1)^p-m^p\big) P\left(\bigcap\limits_{l=1}^n\{X_l>m\}\right),
\end{equation}
\begin{equation}
\label{largestOS}
EX^p_{n:n}=
\sum\limits_{m=0}^\infty \big((m+1)^p-m^p\big)
\left\{1-P\left(\bigcap\limits_{l=1}^n\{X_l\leq m\}\right)\right\}.
\end{equation}

\noindent
{\bf (4)} In \citep[Section 3.2]{E17} a formula for the cumulative distribution function of $X_{r:n}$ has been derived under the assumption that $(X_1,X_2,\ldots,X_n)$ consists of multiple types of RV's such that the RV's of the same type are exchangeable and the RV's of different types are arbitrary dependent. This formula can be alternatively used to obtain moments of $X_{r:n}$ whenever the above assumption is fulfilled.
\end{remark}

\medskip
Theorem \ref{th1} is very general in the sense that it provides formulas for moments of order statistics from discrete RV's $X_1,X_2,\ldots,X_n$ under the single assumption that $X_i$'s take values in the set of non-negative integers. Theoretically we can use this theorem for any marginal distributions of $X_i$'s (with supports containing only non-negative integers) and for any dependence structure between $X_i$'s. Yet, in practice formulas (\ref{th1t1}) and  (\ref{th1t2}) work well only when the supports of $X_1,X_2,\ldots,X_n$ are finite - then the infinite sums $\sum_{m=0}^\infty$ in (\ref{th1t1}) and  (\ref{th1t2}) consist of finite numbers of non-zero elements which allows their evaluation using software. To illustrate with a numerical example application of Theorem \ref{th1} in the case of finite supports of $X_1,X_2,\ldots,X_n$, we consider the random vector $(X_1,\ldots,X_{10})$ with multinomial distribution $Mult(20,(p_1,\ldots,p_{10}))$, i.e., we assume that
$$
P(X_i=n_i, i=1,\ldots,10)=\left\{
\begin{array}{cl}
\frac{20!}{n_1!\ldots n_{10}!}p_1^{n_1}\ldots p_{10}^{n_{10}} & \hbox{ if } \; \sum_{i=1}^{10}n_i=20 \\
0 & \hbox{ otherwise}
\end{array},
\right.
$$
where $p_i>0$, $i=1,\ldots,10$, and $\sum_{i=1}^{10}p_i=1$. In Table \ref{Table1}, for various values of $p_i>0$, $i=1,\ldots,10$, we present means, second raw moments (in brackets) and variances (in double brackets) of the corresponding order statistics $X_{r:10}$, $1\leq r\leq 10$.

\bigskip
\begin{landscape}
\begin{table}[ht]
\footnotesize
\caption{Mean, second raw moment (in brackets) and variance (in double brackets) of $X_{r:10}$ from $(X_1,X_2,\ldots,X_{10})\sim Mult(20,(p_1,p_2,\ldots,p_{10}))$}

\begin{tabular}{|l||c|c|c|c|c|c|c|c|c|c||}
\hline
$p_i\backslash r$ &1 & 2& 3 & 4 &5 &6 &7 &8 &9 &10\\ \hline\hline
\multirow{3}{*}{$p_1=p_2=\cdots=p_{10}=0.1$} &0.215&0.654&0.991&1.325&1.733&2.011&2.368&2.873&3.421&4.410\\
                                            &(0.215)&(0.662)&(1.120)&(1.987)&(3.203)&(4.148)&(5.847)&(8.477)&(12.048)&(20.292)\\
                                            &((0.169))&((0.234))&((0.139))&((0.233))&((0.199))&((0.104))&((0.240))&((0.226)) &((0.343))&((0.846))\\\hline

$p_1=\cdots=p_5=0.08, $ &0.182&0.600&0.953&1.280&1.691&1.998&2.373&2.892&3.484&4.547\\
\multirow{2}{*}{$p_6=\cdots=p_{10}=0.12$}                      &(0.182)&(0.606)&(1.057)&(1.862)&(3.077)&(4.112)&(5.877)&(8.604)&(12.519)&(21.625)\\
                                                            &((0.149))&((0.245))&((0.149))&((0.224))&((0.218))&((0.120))&((0.246))&((0.240))&((0.379))&((0.950))\\\hline

$p_i=0.045+0.01i$, &0.148 &0.540&0.911&1.236&1.648&  1.985&2.381&2.916&3.551&4.683\\
\multirow{2}{*}{$i=1,\ldots,10$} &(0.148)&(0.544)&(0.993)&(1.742)&(2.950)&(4.078)&(5.924)&(8.758)&(13.024)&(22.973)\\
                                                    &((0.126))&((0.252))&((0.164))&((0.215))&((0.234))&((0.138))&((0.253))&((0.254))&((0.413))&((1.043))\\\hline

$p_1=\cdots=p_9=0.05,$  &0.009&0.077&0.284&0.597&0.877&1.118&1.451&1.914&2.672&11.001\\
\multirow{2}{*}{$p_{10}=0.55$} &(0.009)&(0.077)&(0.284)&(0.603)&(0.934)&(1.423)&(2.385)&(4.011)&(7.815)&(125.959)\\
                                                    &((0.008))&((0.071))&((0.204))&((0.246))&((0.165))&((0.174))&((0.280))&((0.348))&((0.674))&((4.938))\\\hline
\end{tabular}
\label{Table1}
\end{table}
\end{landscape}
A problem arises when we want to apply Theorem \ref{th1} to compute moments of order statistics from RV's with infinite supports. Then the sums $\sum_{m=0}^\infty$ in (\ref{th1t1}) and  (\ref{th1t2}) have infinitely many positive terms and we are not able to add all these terms using software. We propose two solutions of this problem.  The first solution  is a truncation method presented below and leading to approximate values of $EX^p_{r:n}$. The second one,  described in details in Section~\ref{section3}, concerns a special  case when the random vector $(X_1,X_2,\ldots,X_n)$ has a multivariate geometric distribution.

In the truncation method we first fix $d>0$, the desired accuracy of the result. Next we split the infinite series in (\ref{th1d1}) into two parts as follows
\begin{eqnarray*}
EX_{r:n}^p&=&\sum_{m=0}^{M_0} \big((m+1)^p-m^p\big)P\left(X_{r:n}>m\right) \\
&+&\sum_{m=M_0+1}^\infty \big((m+1)^p-m^p\big)P\left(X_{r:n}>m\right),
\end{eqnarray*}
where $M_0$ is so chosen that
\begin{equation}
\label{s2M0}
\sum_{m=M_0+1}^\infty \big((m+1)^p-m^p\big)P\left(X_{r:n}>m\right)\leq d.
\end{equation}
Then by (\ref{th1d2}) - (\ref{th1d4}) both  the equivalent  approximate formulas
\begin{eqnarray}
EX^p_{r:n}&\approx&\sum\limits_{m=0}^{M_0} \big((m+1)^p-m^p\big)  \sum_{s=0}^{r-1}  \sum_{(j_1,\ldots,j_n)\in\mathcal{P}_s}   \nonumber \\
&&
P\left(\bigg(\bigcap\limits_{l=1}^s\left\{X_{j_l}\leq m\right\}\bigg)\cap \bigg(\bigcap\limits_{l=s+1}^n\left\{X_{j_l}>m\right\}\bigg)\right)  \label{af1}
\end{eqnarray}
and
\begin{eqnarray}
EX^p_{r:n}&\approx&\sum\limits_{m=0}^{M_0} \big((m+1)^p-m^p\big)
\left\{1-\sum\limits_{s=r}^{n}\sum_{(j_1,\ldots,j_n)\in\mathcal{P}_s} \right.\nonumber \\
&& \left.P\left(\bigg(\bigcap\limits_{l=1}^s\left\{X_{j_l}\leq m\right\}\bigg)\cap \bigg(\bigcap\limits_{l=s+1}^n\left\{X_{j_l}>m\right\}\bigg)\right)\right\}  \label{af2}
\end{eqnarray}
introduce an error not greater than $d$.

What is left is to find $M_0$  satisfying (\ref{s2M0}). First observe that if  $EX^p_{r:n}$ is finite then the infinite series in (\ref{th1d1}) converges and consequently for any $d>0$ there exists  $M_0$ such that (\ref{s2M0}) holds. This means that the finiteness of $EX^p_{r:n}$ guarantees the existence of $M_0$  satisfying (\ref{s2M0}). Yet, to derive a~convenient formula for  $M_0$ we will need a stronger assumption than $EX^p_{r:n}<\infty$, namely condition (\ref{th2a1}) given later on. To see this note that 
\begin{eqnarray}
&&\sum_{m=M_0+1}^\infty \big((m+1)^p-m^p\big)P\left(X_{r:n}>m\right)  \nonumber\\
&=&\sum_{m=M_0+1}^\infty \big((m+1)^p-m^p\big) \nonumber \\
&& \times\sum_{s=0}^{r-1}
\sum_{(j_1,\ldots,j_n)\in\mathcal{P}_s}P\left(X_{j_1}\leq m, \ldots, X_{j_s}\leq m, X_{j_{s+1}}> m,\ldots,X_{j_n}> m\right)   \nonumber\\
&\leq& \sum_{m=M_0+1}^\infty \big((m+1)^p-m^p\big)\sum_{s=0}^{r-1}
\sum_{(j_1,\ldots,j_n)\in\mathcal{P}_s}P\left(X_{j_n}> m\right)    \nonumber\\
&\leq& \sum_{m=M_0+1}^\infty \big((m+1)^p-m^p\big) \max_{j=1,\ldots,n}P\left(X_j> m\right) 
\sum_{s=0}^{r-1}
\sum_{(j_1,\ldots,j_n)\in\mathcal{P}_s}1   \nonumber\\
&=& \sum_{m=M_0+1}^\infty \big((m+1)^p-m^p\big) \max_{j=1,\ldots,n}P\left(X_j> m\right) 
\sum_{s=0}^{r-1}
{n \choose s}   \nonumber\\
&=&\sum_{s=0}^{r-1}{n \choose s}  \sum_{m=M_0+1}^\infty \big((m+1)^p-m^p\big) \sum_{x=m+1}^{\infty}P(X_{j_{max(m)}}=x), 
\label{s2ps1}
\end{eqnarray}
where 
\begin{equation}
\label{jmaxm}
j_{max(m)}=\argmax_{j=1,\ldots,n}P\left(X_j> m\right).
\end{equation}
In the exchangeable case all the probabilities $P\left(X_j> m\right)$, $j=1,\ldots,n$, are equal so we can take $j_{max(m)}=1$, $m=0,1,\ldots$. Hence in this case $j_{max(m)}$ does not depend on $m$. This is so also in some non-exchangeable cases of interest (see the begining of proofs of Corollaries \ref{th2c1} and \ref{th2c2}). Assuming that $j_{max(m)}$ does not depend on $m$, denoting 
$$j_{max(m)}=j_0, \quad m=0,1,\ldots$$ 
and interchanging the order of summation in (\ref{s2ps1}) we get
\begin{eqnarray*}
&&\sum_{m=M_0+1}^\infty \big((m+1)^p-m^p\big)P\left(X_{r:n}>m\right) \\
&\leq&\sum_{s=0}^{r-1}{n \choose s} 
\sum_{x=M_0+2}^{\infty}P(X_{j_0}=x)
 \sum_{m=M_0+1}^{x-1} \big((m+1)^p-m^p\big) \\ 
&=&\sum_{s=0}^{r-1}{n \choose s} 
\sum_{x=M_0+2}^{\infty}P(X_{j_0}=x)
\big(x^p-(M_0+1)^p\big) \\ 
&\leq&\sum_{s=0}^{r-1}{n \choose s} 
\sum_{x=M_0+2}^{\infty}x^pP(X_{j_0}=x).
\end{eqnarray*}
It follows that the condition 
\begin{equation}
\label{warM0}
 \sum_{x=M_0+2}^{\infty}x^pP(X_{j_0}=x)\leq d \left(\sum_{s=0}^{r-1}{n \choose s}\right)^{-1}
\end{equation}
implies (\ref{s2M0}). Note that for any $d>0$ there exists  $M_0$ satisfying (\ref{warM0}) if $EX^p_{j_0}<\infty$, because then $\lim_{M_0\to\infty} \sum_{x=M_0+2}^{\infty}x^pP(X_{j_0}=x)=0$.

Summarizing, we have proved the following result.
\begin{theorem}
\label{th2}
Let the assumptions of Theorem \ref{th1} hold. Moreover, suppose that $j_{max(m)}$ defined in (\ref{jmaxm}) does not depend on $m$ and denote it briefly by $j_0$. If, for fixed $p\in\{1,2,\ldots\}$,
\begin{equation}
\label{th2a1}
EX^p_{j_0}<\infty,
\end{equation}
then, for  $1\leq r\leq n$, $EX^p_{r:n}$ is finite and the approximate formulas (\ref{af1}) and (\ref{af2}) with $M_0$ satisfying (\ref{warM0}) introduce an error not greater than $d$.
\end{theorem}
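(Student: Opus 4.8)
The plan is to derive both assertions --- finiteness of $EX^p_{r:n}$ and the stated error bound --- from one and the same chain of estimates applied to the series representation (\ref{th1d1}), $EX^p_{r:n}=\sum_{m=0}^\infty\big((m+1)^p-m^p\big)P(X_{r:n}>m)$. First I would bound the survival probability $P(X_{r:n}>m)$ by a single marginal tail. Using (\ref{th1d2})--(\ref{th1d4}) I expand $P(X_{r:n}>m)$ as a sum over $s=0,\ldots,r-1$ and over $(j_1,\ldots,j_n)\in\mathcal{P}_s$ of the joint probabilities, and then retain only the event $\{X_{j_n}>m\}$; since $s\le r-1\le n-1$ forces $s+1\le n$, this event is always present, so each joint probability is at most $P(X_{j_n}>m)\le\max_{j}P(X_j>m)$. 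Counting the ${n\choose s}$ members of $\mathcal{P}_s$ then gives $P(X_{r:n}>m)\le\big(\sum_{s=0}^{r-1}{n\choose s}\big)\max_j P(X_j>m)$.

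Next I would use the standing hypothesis that the maximizing index is constant, $j_{max(m)}\equiv j_0$, to replace $\max_j P(X_j>m)$ by $P(X_{j_0}>m)=\sum_{x=m+1}^\infty P(X_{j_0}=x)$. Substituting this into the tail $\sum_{m=M_0+1}^\infty$ and interchanging the two summations --- legitimate by Tonelli's theorem, since every term is nonnegative --- collapses the inner sum via the telescoping identity $\sum_{m=M_0+1}^{x-1}\big((m+1)^p-m^p\big)=x^p-(M_0+1)^p\le x^p$. This produces the bound $\sum_{s=0}^{r-1}{n\choose s}\sum_{x=M_0+2}^\infty x^p P(X_{j_0}=x)$ on the tail, so condition (\ref{warM0}) immediately yields (\ref{s2M0}); the approximations (\ref{af1}) and (\ref{af2}) therefore differ from $EX^p_{r:n}$ by at most $d$.

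For the finiteness claim I would run the identical computation with the truncation removed, i.e.\ starting the outer series at $m=0$ (formally $M_0=-1$), which gives $EX^p_{r:n}\le\big(\sum_{s=0}^{r-1}{n\choose s}\big)\sum_{x=1}^\infty x^p P(X_{j_0}=x)=\big(\sum_{s=0}^{r-1}{n\choose s}\big)EX^p_{j_0}$, finite by hypothesis (\ref{th2a1}). I expect the point requiring the most care to be the placement of the constancy assumption on $j_{max(m)}$: it is exactly what allows the $m$-dependent maximizer to be pulled outside the double sum so that the tail factors through the moments of one fixed variable $X_{j_0}$. Were $j_{max(m)}$ to vary with $m$, the interchange of summation and the telescoping to $x^p$ would no longer isolate a single marginal moment, and the convenient condition (\ref{warM0}) could not be extracted; the remaining steps are routine manipulations of nonnegative series.
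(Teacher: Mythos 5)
Your proposal is correct and takes essentially the same route as the paper's own derivation: the same expansion of $P(X_{r:n}>m)$ via (\ref{th1d2})--(\ref{th1d4}), the same bound obtained by retaining only $\{X_{j_n}>m\}$ (valid since $s\leq r-1\leq n-1$), the count $\sum_{s=0}^{r-1}{n \choose s}$, the use of the constancy of $j_{max(m)}$ to fix $j_0$, the interchange of the two nonnegative summations, and the telescoping bound $x^p-(M_0+1)^p\leq x^p$ leading from (\ref{warM0}) to (\ref{s2M0}). Your finiteness argument, running the identical chain from $m=0$ to get $EX^p_{r:n}\leq\big(\sum_{s=0}^{r-1}{n \choose s}\big)EX^p_{j_0}<\infty$, is exactly the observation implicit in the paper's summary step.
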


Condition (\ref{warM0}) determining $M_0$ can be rewritten in an explicit form  for some specific univariate marginal distributions of $(X_1,\ldots,X_n)$. Below we consider two cases: the first one when these marginal distributions are Poisson, and the second one when these are negative binomial. By $F_X^{\leftarrow}$ we denote the quantile function of the RV $X$, i.e., 
$$
F_X^{\leftarrow}(q)=\min\{x\in\mathbb{R} : P(X\leq x)\geq q\}, \quad q\in(0,1).
$$

\begin{corollary}
\label{th2c1}
Let the marginal distributions of the random vector \linebreak  $(X_1,\ldots,X_n)$ be Poisson distributions $\mathrm{Pois}(\lambda_j)$ with $\lambda_j>0$, $j=1,\ldots,n$,  i.e., 
$$P(X_j=x)=\e^{-\lambda_j}\frac{\lambda_j^x}{x!}, \quad x=0,1,\ldots, \;  j=1,\ldots, n.$$
Set $ j_0=\argmax_{j=1,\ldots,n}\lambda_j$. 
Then the approximate formulas (\ref{af1}) and (\ref{af2})  with 
$$
M_0=\left\{
\begin{array}{lcl}
p-2 & \hbox{if} &  d_{Pois}\leq 0 \\
F_{X_{j_0}}^{\leftarrow}(d_{Pois})+p-1  & \hbox{if} &  d_{Pois}\in(0,1)
\end{array}
\right.,
$$
where 
$$
d_{Pois}=1-d\, 2^{-p(p-1)/2}\lambda_{j_0}^{-p}\left(\sum_{s=0}^{r-1}{n \choose s}\right)^{-1},
$$
introduce an error not exceeding the fixed value $d>0$.
\end{corollary}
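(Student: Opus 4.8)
The plan is to verify the hypotheses of Theorem~\ref{th2} for Poisson marginals and then to make the abstract condition (\ref{warM0}) explicit; the value of $M_0$ in the statement is precisely what turns (\ref{warM0}) into a closed-form inequality. \textbf{Step 1 (identifying $j_0$).} First I would check that $j_{\max(m)}$ is constant in $m$ and equals $\argmax_{j}\lambda_j$. The key is that the Poisson family is stochastically increasing in its parameter: if $\lambda_j\le\lambda_{j_0}$ then, writing $\mathrm{Pois}(\lambda_{j_0})$ as the independent sum of $\mathrm{Pois}(\lambda_j)$ and $\mathrm{Pois}(\lambda_{j_0}-\lambda_j)$, one gets $P(X_{j}>m)\le P(X_{j_0}>m)$ for every $m=0,1,\ldots$. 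Hence $\argmax_{j}P(X_j>m)=\argmax_j\lambda_j=:j_0$ is independent of $m$, so Theorem~\ref{th2} applies with this $j_0$, and (\ref{th2a1}) holds because every Poisson variable has finite moments of all orders. It then remains only to produce an $M_0$ satisfying (\ref{warM0}) for $X_{j_0}\sim\mathrm{Pois}(\lambda)$, where $\lambda:=\lambda_{j_0}$.

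\textbf{Step 2 (tail-moment estimate).} The heart of the proof is the bound
$$
\sum_{x=a}^{\infty}x^{p}P(X_{j_0}=x)\le 2^{p(p-1)/2}\,\lambda^{p}\,P(X_{j_0}\ge a-p),\qquad a\ge p,
$$
which I would establish by downward induction on the exponent using the Poisson identity $x\,P(X_{j_0}=x)=\lambda\,P(X_{j_0}=x-1)$. Writing $x^{p}=x^{p-1}\cdot x$ and shifting the summation index gives
$$
\sum_{x=a}^{\infty}x^{p}P(X_{j_0}=x)=\lambda\sum_{y=a-1}^{\infty}(y+1)^{p-1}P(X_{j_0}=y),
$$
and the elementary inequality $(y+1)^{p-1}\le 2^{p-1}y^{p-1}$, valid for $y\ge1$, reduces the power by one at the cost of a factor $2^{p-1}\lambda$ and a shift of the lower limit by one. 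Iterating $p$ times accumulates $2^{(p-1)+(p-2)+\cdots+0}\lambda^{p}=2^{p(p-1)/2}\lambda^{p}$ and shifts the lower limit from $a$ to $a-p$, leaving the plain tail probability $P(X_{j_0}\ge a-p)$. The constraint $a\ge p$ is exactly what keeps every intermediate lower limit at least $1$, so the inequality used at each step is legitimate (the final, $k=1$, step is an exact identity and needs no such inequality).

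\textbf{Step 3 (choosing $M_0$).} I would apply Step~2 with $a=M_0+2$. In the main case $d_{Pois}\in(0,1)$ the choice $M_0=F_{X_{j_0}}^{\leftarrow}(d_{Pois})+p-1$ gives $a-p=F_{X_{j_0}}^{\leftarrow}(d_{Pois})+1$, so $P(X_{j_0}\ge a-p)=P\bigl(X_{j_0}>F_{X_{j_0}}^{\leftarrow}(d_{Pois})\bigr)\le 1-d_{Pois}$ by the defining property $P\bigl(X_{j_0}\le F_{X_{j_0}}^{\leftarrow}(d_{Pois})\bigr)\ge d_{Pois}$ of the quantile function. Substituting $1-d_{Pois}=d\,2^{-p(p-1)/2}\lambda^{-p}\bigl(\sum_{s=0}^{r-1}\binom{n}{s}\bigr)^{-1}$ cancels the factors $2^{p(p-1)/2}\lambda^{p}$ and reproduces exactly the right-hand side of (\ref{warM0}). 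In the degenerate case $d_{Pois}\le0$, i.e.\ $d\bigl(\sum_{s=0}^{r-1}\binom{n}{s}\bigr)^{-1}\ge 2^{p(p-1)/2}\lambda^{p}$, the choice $M_0=p-2$ gives $a-p=0$ and $P(X_{j_0}\ge0)=1$, so Step~2 bounds the tail by $2^{p(p-1)/2}\lambda^{p}\le d\bigl(\sum_{s=0}^{r-1}\binom{n}{s}\bigr)^{-1}$, and (\ref{warM0}) holds again. In both cases Theorem~\ref{th2} guarantees that (\ref{af1}) and (\ref{af2}) introduce an error not exceeding $d$.

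The one genuinely delicate point is the tail-moment estimate of Step~2 together with the bookkeeping of the index shift: the term $+\,(p-1)$ in the definition of $M_0$ is tuned precisely so that the $p$ successive shifts produced by the recursion carry $a=M_0+2$ into $a-p=F_{X_{j_0}}^{\leftarrow}(d_{Pois})+1$, converting the tail moment into a single tail probability that the quantile controls. Getting the constant $2^{p(p-1)/2}$ and the range condition $a\ge p$ exactly right is what requires care; the rest is substitution.
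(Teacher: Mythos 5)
Your proposal is correct and follows essentially the same route as the paper's proof: the tail-moment bound $\sum_{x\ge M_0+2}x^pP(X_{j_0}=x)\le 2^{p(p-1)/2}\lambda_{j_0}^p\,P(X_{j_0}>M_0+1-p)$, valid for $M_0\ge p-2$ and obtained by iterating the doubling inequality via the identity $xP(X_{j_0}=x)=\lambda_{j_0}P(X_{j_0}=x-1)$, is exactly the paper's estimate (same constant, same index bookkeeping), and your quantile choice of $M_0$ in both the main and degenerate cases matches the paper's reduction of (\ref{warM0}) to (\ref{M0Pois}). The only cosmetic difference is in Step 1, where you establish $j_{max(m)}\equiv j_0$ by the superposition coupling of Poisson laws, whereas the paper simply observes that $P(X_j>m)=1-\e^{-\lambda_j}\sum_{x=0}^{m}\lambda_j^x/x!$ is increasing in $\lambda_j$.
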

\begin{proof}
If $X_j\sim \mathrm{Pois}(\lambda_j)$, then
$$
P(X_j>m)=1+\left(-\e^{-\lambda_j}\right) \sum_{x=0}^m\frac{\lambda_j^x}{x!}, \quad m=0,1,\ldots,
$$
which shows that, for any fixed $m\in\{0,1,\ldots\}$, $P(X_j>m)$ is an increasing function of $\lambda_j>0$. Consequently, for $m=0,1,\ldots$,
$$
\max_{j=1,\ldots,n}P(X_j>m)=P(X_{j_0}>m), \quad \hbox{ where } j_0=\argmax_{j=1,\ldots,n}\lambda_j.
$$
Hence $j_{max(m)}$ defined in (\ref{jmaxm}) does not depend on $m$ and we have $j_{max(m)}=j_0$. Therefore the assumptions of Theorem \ref{th2} are satisfied. Before we apply this theorem we show that if $M_0\geq p-2$, then the condition
\begin{equation}
\label{M0Pois}
P(X_{j_0}>M_0+1-p)\leq d\, 2^{-p(p-1)/2}\lambda_{j_0}^{-p}\left(\sum_{s=0}^{r-1}{n \choose s}\right)^{-1},
\end{equation}
 implies (\ref{warM0}). To do this, note that, for any $y=2,3,\ldots$ and $q=0,1,\ldots$, 
\begin{equation}
\label{nierpom}
y^q=(y-1+1)^q=\sum_{i=0}^q{q \choose i}(y-1)^i\leq  \sum_{i=0}^q{q \choose i}(y-1)^q=(y-1)^q2^q.
\end{equation}
Consequently,
$$
\begin{array}{lcll}
&& \hspace{-7mm}\displaystyle \sum_{x=M_0+2}^\infty x^pP(X_{j_0}=x) = \sum_{x=M_0+2}^\infty\e^{-\lambda_{j_0}} \lambda_{j_0}^x\frac{x^{p-1}}{(x-1)!}&\\
&\leq & \displaystyle 2^{p-1}\sum_{x=M_0+2}^\infty\e^{-\lambda_{j_0}} \lambda_{j_0}^x\frac{(x-1)^{p-2}}{(x-2)!}  & \hbox{ if } p\geq 2\\ 
&\leq & \displaystyle  2^{p-1}2^{p-2}\sum_{x=M_0+2}^\infty\e^{-\lambda_{j_0}} \lambda_{j_0}^x\frac{(x-2)^{p-3}}{(x-3)!}  & \hbox{ if } p\geq 3 \hbox{ and } M_0\geq 1\\ 
&\leq & \displaystyle \cdots &\\
&\leq & \displaystyle 2^{p\!-\!1}2^{p\!-\!2}\cdots 2^{p\!-\!(p\!-\!1)}\sum_{x=M_0+2}^\infty\e^{-\lambda_{j_0}} \lambda_{j_0}^x\frac{(x\!-\!(p\!-\!1))^{p-p}}{(x-p)!}   & \hbox{ if }  M_0\geq p-2\\ 
&=& \displaystyle 2^{p(p-1)/2} \lambda_{j_0}^p \sum_{x=M_0+2}^\infty\e^{-\lambda_{j_0}} \frac{\lambda_{j_0}^{x-p}}{(x-p)!} &\\
&=& \displaystyle 2^{p(p-1)/2} \lambda_{j_0}^pP(X_{j_0}>M_0+1-p).&
\end{array}
$$
Thus
% we see that
 indeed if $M_0\geq p-2$ and (\ref{M0Pois})  holds then (\ref{warM0}) is satisfied. But if $d_{Pois}\in(0,1)$ then the smallest $M_0$ for which (\ref{M0Pois})  is true equals $F_{X_{j_0}}^{\leftarrow}(d_{Pois})+p-1$. Now 
application of Theorem \ref{th2} finishes the proof.
\end{proof}

\begin{corollary}
\label{th2c2}
Let the marginal distributions of the random vector \linebreak $(X_1,\ldots,X_n)$ be negative binomial  distributions $\mathrm{NBin}(R,p_j)$ with $R>0$ and $p_j\in(0,1)$, $j=1,\ldots,n$, i.e.,
$$P(X_j=x)=\frac{\Gamma(x+R)}{x!\Gamma(R)}(1-p_j)^xp_j^R, \quad x=0,1,\ldots, \;  j=1,\ldots, n.$$
Set $j_0=\argmin_{j=1,\ldots,n}p_j$ and 
\begin{equation}
\label{defXfalka}
\tilde{X}\sim \mathrm{NBin}(R+p,p_{j_0}).
\end{equation}
Then the approximate formulas (\ref{af1}) and (\ref{af2})  with 
$$
M_0=\left\{
\begin{array}{lcl}
p-2 & \hbox{if} &  d_{NBin}\leq 0 \\
F_{\tilde{X}}^{\leftarrow}(d_{NBin})+p-1  & \hbox{if} &  d_{NBin}\in(0,1)
\end{array}
\right.,
$$
where 
$$
d_{NBin}=1-\frac{d}{2^{p(p-1)/2}\prod_{i=0}^{p-1}(R+i)\sum_{s=0}^{r-1}{n \choose s}}\left(\frac{p_{j_0}}{1-p_{j_0}}\right)^p,
$$
 introduce an error not exceeding the fixed value $d>0$.
\end{corollary}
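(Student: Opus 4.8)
The plan is to follow the same route as in the proof of Corollary \ref{th2c1}, replacing the Poisson-specific estimates by their negative binomial counterparts. First I would verify that the hypotheses of Theorem \ref{th2} hold, i.e.\ that $j_{max(m)}$ in (\ref{jmaxm}) is independent of $m$. For the family $\mathrm{NBin}(R,p_j)$ the tail $P(X_j>m)$ is a decreasing function of $p_j$ for every fixed $m$ (this is the usual stochastic monotonicity in the success probability; equivalently $P(X_j\le m)$ equals a regularized incomplete beta function that increases in $p_j$). Hence for all $m$ the maximum of $P(X_j>m)$ over $j$ is attained at $j_0=\argmin_{j=1,\ldots,n}p_j$, so $j_{max(m)}=j_0$ does not depend on $m$ and Theorem \ref{th2} applies.

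Next, assuming $M_0\ge p-2$, I would bound the tail sum $\sum_{x=M_0+2}^\infty x^pP(X_{j_0}=x)$ appearing in (\ref{warM0}). Writing $x^p/x!=x^{p-1}/(x-1)!$ and carrying the factors $\Gamma(x+R)/\Gamma(R)$ and $(1-p_{j_0})^xp_{j_0}^R$ along unchanged, I would apply the elementary inequality (\ref{nierpom}) exactly $p-1$ times, each step lowering both the exponent and the argument of the factorial by one and producing a factor $2^{p-k}$. As in the Poisson case this telescopes the quotient $x^{p-1}/(x-1)!$ down to $1/(x-p)!$ at the total cost of $2^{p(p-1)/2}$, yielding
$$\sum_{x=M_0+2}^\infty x^pP(X_{j_0}=x)\le 2^{p(p-1)/2}\sum_{x=M_0+2}^\infty\frac{\Gamma(x+R)}{(x-p)!\,\Gamma(R)}(1-p_{j_0})^xp_{j_0}^R;$$
the constraint $M_0\ge p-2$ is precisely what makes every application of (\ref{nierpom}) legitimate.

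The crucial step is then to recognize the summand on the right as a shifted probability mass of $\tilde X\sim\mathrm{NBin}(R+p,p_{j_0})$ from (\ref{defXfalka}). Indeed, setting $y=x-p$ gives $\Gamma(x+R)/(x-p)!=\Gamma(y+R+p)/y!$, and comparing with the mass function of $\tilde X$ shows that $\frac{\Gamma(x+R)}{(x-p)!\,\Gamma(R)}(1-p_{j_0})^xp_{j_0}^R$ equals $\prod_{i=0}^{p-1}(R+i)\big(\tfrac{1-p_{j_0}}{p_{j_0}}\big)^p P(\tilde X=x-p)$, where I used $\Gamma(R+p)/\Gamma(R)=\prod_{i=0}^{p-1}(R+i)$ together with the bookkeeping of the powers of $p_{j_0}$ and $1-p_{j_0}$. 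Reindexing the remaining sum to $\sum_{y\ge M_0+2-p}P(\tilde X=y)=P(\tilde X>M_0+1-p)$, the bound becomes $\sum_{x=M_0+2}^\infty x^pP(X_{j_0}=x)\le 2^{p(p-1)/2}\prod_{i=0}^{p-1}(R+i)\big(\tfrac{1-p_{j_0}}{p_{j_0}}\big)^pP(\tilde X>M_0+1-p)$.

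Finally I would compare this with the requirement (\ref{warM0}). It shows that $P(\tilde X>M_0+1-p)\le 1-d_{NBin}$, equivalently $P(\tilde X\le M_0+1-p)\ge d_{NBin}$, is sufficient for (\ref{warM0}). When $d_{NBin}\in(0,1)$ the definition of the quantile function makes the smallest such $M_0$ equal to $F_{\tilde X}^{\leftarrow}(d_{NBin})+p-1$, while if $d_{NBin}\le0$ the inequality holds trivially and the constraint $M_0\ge p-2$ forces $M_0=p-2$; both cases match the claimed formula, and invoking Theorem \ref{th2} then yields that (\ref{af1}) and (\ref{af2}) introduce an error not exceeding $d$. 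I expect the one genuinely delicate point to be the identification of the telescoped summand with the mass function of $\tilde X\sim\mathrm{NBin}(R+p,p_{j_0})$ and the precise collection of the constants $\prod_{i=0}^{p-1}(R+i)$ and $\big(\tfrac{p_{j_0}}{1-p_{j_0}}\big)^p$ that reproduce $d_{NBin}$; the monotonicity step and the telescoping are routine analogues of the Poisson argument.
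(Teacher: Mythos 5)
Your proposal is correct and follows essentially the same route as the paper's proof: the monotonicity of $P(X_j>m)$ in $p_j$ identifying $j_{max(m)}=j_0$, the $(p-1)$-fold application of (\ref{nierpom}) under $M_0\geq p-2$ at total cost $2^{p(p-1)/2}$, the identification of the telescoped summand with $\prod_{i=0}^{p-1}(R+i)\left(\frac{1-p_{j_0}}{p_{j_0}}\right)^p P(\tilde{X}=x-p)$ for $\tilde{X}\sim\mathrm{NBin}(R+p,p_{j_0})$, and the appeal to Theorem \ref{th2} via (\ref{warM0}). Your passage from the tail bound to the quantile formula for $M_0$, including both cases $d_{NBin}\leq 0$ and $d_{NBin}\in(0,1)$, also matches the paper's argument, so there is nothing to correct.
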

\begin{proof}
The proof is similar to that of Corollary \ref{th2c1}. First we observe that, for any fixed $R>0$ and $m\in\{0,1,\ldots\}$, $P(X_j>m)$ is a decreasing function of $p_j\in(0,1)$, because
$$
P(X_j>m)=1+\left(-p_j^R\right) \sum_{x=0}^m\frac{\Gamma(x+R)}{x!\Gamma(R)}(1-p_j)^x, \quad m=0,1,\ldots.
$$
 Consequently, for $m=0,1,\ldots$,
$$
\max_{j=1,\ldots,n}P(X_j>m)=P(X_{j_0}>m), \quad \hbox{ where } j_0=\argmin_{j=1,\ldots,n}p_j,
$$
which shows that $j_{max(m)}$ defined in (\ref{jmaxm}) does not depend on $m$ and  $j_{max(m)}=j_0$. 

Now using (\ref{nierpom}) we get
$$
\begin{array}{lcll}
&&\hspace{-7mm}  \displaystyle \sum_{x=M_0+2}^\infty x^pP(X_{j_0}=x)=\sum_{x=M_0+2}^\infty x^{p-1}\frac{\Gamma(x+R)}{(x-1)!\Gamma(R)}(1-p_{j_0})^xp_{j_0}^R & \\
&\leq & \displaystyle 2^{p-1}\sum_{x=M_0+2}^\infty (x-1)^{p-2}\frac{\Gamma(x+R)}{(x-2)!\Gamma(R)}(1-p_{j_0})^xp_{j_0}^R  & \hbox{ if }   p\geq 2\\ 
&\leq & \displaystyle \cdots &\\
&\leq &  \displaystyle 2^{p(p-1)/2} \sum_{x=M_0+2}^\infty
 \frac{\Gamma(x+R)}{(x-p)!\Gamma(R)}(1-p_{j_0})^xp_{j_0}^R  
& \hbox{ if } M_0\geq p-2 \\
&=& \displaystyle 2^{p(p-1)/2}R(R+1)\cdots(R+p-1) \left(\frac{1-p_{j_0}}{p_{j_0}}\right)^p & \\
&& \qquad \times P(\tilde{X}>M_0+1-p),&
\end{array}
$$
where the RV $\tilde{X}$ is defined in (\ref{defXfalka}). It follows that  if $M_0\geq p-2$ then the condition 
\begin{equation*}
%\label{M0NBinom}
P(\tilde{X}>M_0+1-p)\leq  \frac{d}{2^{p(p-1)/2}R(R+1)\cdots(R+p-1)\sum_{s=0}^{r-1}{n \choose s}}\left(\frac{p_{j_0}}{1-p_{j_0}}\right)^p,
\end{equation*}
 implies (\ref{warM0}). Theorem \ref{th2} now yields the desired conclusion.
\end{proof}

The quantile functions of RV's with Poisson and negative binomial distributions are implemented in statistical packages. Therefore Corollaries \ref{th2c1} and \ref{th2c2} enable to compute numerically approximate values of raw moments of order statistics from the random vector $(X_1,X_2,\ldots,X_n)$ with univariate marginal Poisson or negative binomial distributions. Moreover,  Corollaries \ref{th2c1} and \ref{th2c2} can be applied in the case of any dependence structure between $X_1,\ldots,X_n$. To illustrate the computational details  we fix $n=10$, the level of accuracy $d=0.0005$, and assume that $X_1,\ldots,X_{10}$ are independent. Selecting some values of $\lambda_i$,  $i=1,\ldots,10$, in Tables \ref{Table2} and \ref{Table3}, we present means $EX_{r:10}$ and second raw moments $EX^2_{r:10}$, respectively, of all order statistics from  $(X_1,\ldots,X_{10})$, where $X_i\sim \mathrm{ Pois}(\lambda_i)$, $i=1,\ldots,10$. Furthermore, in each table  in brackets underneath  moments, we provide values of $M_0$, i.e., numbers of terms sufficient in the sum to obtain the desired accuracy. Corresponding results for $(X_1,\ldots,X_{10})$ with $X_i$,  $i=1,\ldots,10$, having the negative binomial distribution $\mathrm{NBin}(R,p_{i})$ are given in Tables \ref{Table4} and \ref{Table5} for $R=2$, $R=5$ and selected values of $p_i$, $i=1,\ldots,10$.

\bigskip
\begin{landscape}
\begin{table}[ht]
\footnotesize
\begin{center}
\caption{Mean of $X_{r:10}$ from $(X_1,\ldots,X_{10})$, where $X_i\sim$Pois($\lambda_i$) and $X_{1},\ldots,X_{10}$ are independent}
\begin{tabular}{|l||c|c|c|c|c|c|c|c|c|c||}
\hline
$\lambda_i\backslash r$ &1 & 2& 3 & 4 & 5 & 6 & 7 & 8 & 9 & 10\\ \hline\hline
$\lambda_i=1, i=1,\ldots,10$ & 0.010 & 0.070 & 0.225& 0.471& 0.737& 0.979& 1.230 & 1.551& 1.990& 2.738\\
                                & (6) & (7)  & (8) & (8)    & (8) & (9) & (9) & (9) & (9) & (9)\\ \hline
$\lambda_i=1, i=1,\ldots,5$ & 0.081  & 0.343& 0.722&1.117 &1.557 & 2.116&2.864 &3.851 &5.155 &7.193 \\
$\lambda_i=i-4, i=6,\ldots,10$& (17) & (19) & (20) & (21) & (22) & (22) & (23) & (23) &(23) & (23)\\ \hline
$\lambda_i=1, i=1,\ldots,5$ & 0.102 &0.414 &0.860 &1.389 &2.220 &6.497 & 8.367 &9.879 &11.483 &13.788 \\
$\lambda_i=10, i=6,\ldots,10$& (24) & (27) &(28) &(29)   &(30) &(31) & (31)     &(31) & (31) & (31)\\ \hline
$\lambda_i=i, i=1,\ldots,10$ &0.620 & 1.598&2.587 &3.585 &4.601 &5.653 & 6.774 &8.030 &9.578 & 11.974\\
                             &(24) &(27)   &(28) &(29)    & (30) &(31) & (31) & (31)  &(31) & (31)\\ \hline
$\lambda_i=2i+1, i=1,\ldots,4$ &2.482 &4.354 &5.806 &6.969 &7.980 &8.934 &9.901 &10.963 &12.272 & 14.339\\
$\lambda_i=10, i=5,\ldots,10$& (24) &(27)  &(28) &(29)    &(30)  &(31)  &(31)  &(31) &(31) & (31)\\ \hline
$\lambda_1=\lambda_2=\lambda_3=10$& 7.375&9.844 &12.339 &16.587 &19.696 &22.727 &26.539  &30.155 &34.638 &50.099 \\
$\lambda_4=\lambda_5=\lambda_6=20$& (83)&(87)   &(90)   &(92) &(93)      &(94)  &(94)  & (94)    & (94) & (94)\\
$\lambda_7=\lambda_8=\lambda_9=30$& & & & & & &  & & & \\
$\lambda_{10}=50$& & & & & & &  & & & \\ \hline
\end{tabular}
\label{Table2}
\end{center}
\end{table}

\begin{table}[ht]
\footnotesize
\begin{center}
\caption{Second raw moments of $X_{r:10}$ from $(X_1,\ldots,X_{10})$, where $X_i\sim$Pois($\lambda_i$) and $X_{1},\ldots,X_{10}$ are independent}
\begin{tabular}{|l||c|c|c|c|c|c|c|c|c|c||}
\hline
$\lambda_i\backslash r$ &1 & 2& 3 & 4 & 5 & 6 & 7 & 8 & 9 & 10\\ \hline\hline
$\lambda_i=1, i=1,\ldots,10$& 0.010&0.070 &0.227 &0.480 &0.789 &1.173 &1.770 &2.751 &4.412 &8.319 \\
                & (7)  &(8)  & (9)  & (9)   &(10)  &(10) & (10)  &(10)  &(10) & (10)\\ \hline
$\lambda_i=1, i=1,\ldots,5$ & 0.082 &0.360 &0.839 &1.585 &2.848 &5.042 &9.030 &16.084 &28.522 &55.608 \\
$\lambda_i=i-4, i=6,\ldots,10$& (20) &(22) &(23)  &(24) & (25)  &(25) & (25)  &(25)     &(25) & (25)\\\hline
$\lambda_i=1, i=1,\ldots,5$ & 0.105&0.453 &1.116 &2.419 &5.809 &45.464 &72.835 &100.538 &135.397 &195.864 \\
$\lambda_i=10, i=6,\ldots,10$& (28)&(31)  & (32)& (33) & (34)  &(34)   & (34)  & (34)   & (34)& (34)\\\hline
$\lambda_i=i, i=1,\ldots,10$ & 0.870 &3.318 &7.636 &13.961 &22.455 &33.449 &47.660  &66.701 &94.812 & 149.138\\
&(28) &(31) & (32) &(33)  & (34)   & (34)  & (34)   & (34)  & (34)& (34)\\ \hline
$\lambda_i=2i+1, i=1,\ldots,4$ &7.922 &20.733 &35.407 &50.229 &65.376 &81.593 &99.979 &122.459 & 153.556& 210.746\\
$\lambda_i=10,  i=5,\ldots,10$ &(28)  &(31)   & (32)  & (33)  & (34)  &  (34) & (34)  & (34)& (34)  & (34)\\ \hline
$\lambda_1=\lambda_2=\lambda_3=10$&58.889 &101.184 &157.427 &282.417 &395.389 &524.549 &714.111 &921.182 &1217.132 & 2557.719\\
$\lambda_4=\lambda_5=\lambda_6=20$&(92) & (96)     & (98)   & (100)  & (101)  & (102)  & (102)  & (102)  & (102) & (102)\\
$\lambda_7=\lambda_8=\lambda_9=30$& & & & & & &  & & & \\
$\lambda_{10}=50$& & & & & & &  & & & \\ \hline
\end{tabular}
\label{Table3}
\end{center}
\end{table}
\end{landscape}

\bigskip
\begin{landscape}
\begin{table}[ht]
\begin{center}
\footnotesize
\caption{Mean of $X_{r:10}$ from $(X_1,\ldots,X_{10})$, where $X_i\sim$ NBin($R$,$p_i$) and $X_{1},\ldots,X_{10}$ are independent}
\begin{tabular}{|c|l||c|c|c|c|c|c|c|c|c|c||}
\hline
$R$ & $p_i$ &1 & 2& 3 & 4 & 5 & 6 & 7 & 8 & 9 & 10\\ \hline\hline
\multirow{8}{*}{2} &$p_i=0.1i-0.05$ & 0.003&0.049&0.248&0.665&1.268&2.129&3.500&6.017&12.024&39.429\\
                    &$i=1,\ldots,10$& (271)&(321)&(354)&(378)&(394)&(404)&(410)&(413)&(414)&(414)\\\cline{2-12}
                    &$p_i=0.25$ &0.768    &1.708&2.617&3.534&4.512&5.603&6.883&8.497&10.788&15.090\\
                    &$i=1,\ldots,10$& (41)&(50) &(56) &(60) &(63) &(64) &(66) &(66) &(66)  &(66)\\\cline{2-12}
                    &$p_i=0.25, i=1,\ldots,8$, &0.409&1.112&1.888&2.716&3.633&4.685&5.946&7.556&9.859&14.194\\
                    & $p_9=p_{10}=0.5$           &(41) &(50) &(56) &(60) &(63) &(64) &(66) &(66) &(66)&(66)\\\cline{2-12}
                    &$p_i=0.25, i=1,\ldots,8$, &0.121&0.562&1.353&2.279&3.304&4.455&5.797&7.469&9.816&14.179\\
                    & $p_9=p_{10}=0,75$  &(41) &(50) &(56) &(60) &(63) &(64) &(66) &(66)& (66)&(66)\\\hline
\multirow{8}{*}{5} &$p_i=0.1i-0.05$ &0.080&0.519&1.302&2.350&3.791&5.889&9.210&15.240&29.435&95.509\\
                    &$i=1,\ldots,10$&(415)&(471)&(509)&(535)&(553)&(564)&(570)&(573) &(574) &(575)\\\cline{2-12}
                    &$p_i=0.25$     &5.295&7.732&9.639&11.387&13.123&14.953&16.998&19.454&22.774&28.644\\
                    &$i=1,\ldots,10$&(64) &(74) &(81) &(86)  &(89)  &(91)  &(92)  &(93)  &(93)  &(93)\\\cline{2-12}
                    &$p_i=0.25, i=1,\ldots,8$, &2.843&4.983&7.084&9.072&11.031&13.057&15.276&17.892&21.363&27.398\\
                    &$p_9=p_{10}=0.5$     &(64)&(74)  &(81) &(86) &(89)  &(91)  &(92)  &(93)  &(93)&(93)\\\cline{2-12}
                    &$p_i=0.25, i=1,\ldots,8$, &0.852&2.296&5.987&8.589&10.806&12.952&15.228&17.872&21.356&27.396\\
                    &$p_9=p_{10}=0.75$          &(64) &(74) &(81) &(86) &(89)  &(91)  &(92)  &(93)  &(93)&(93)\\\hline
\end{tabular}
\label{Table4}
\end{center}
\end{table}
\end{landscape}

\bigskip
\begin{landscape}
\begin{table}[ht]
\footnotesize
\begin{center}
\caption{Second raw moments of $X_{r:10}$ from $(X_1,\ldots,X_{10})$, where $X_i\sim$ NBin($R$,$p_i$) and $X_{1},\ldots,X_{10}$ are independent}
\begin{tabular}{|c|l||c|c|c|c|c|c|c|c|c|c||}
\hline
$R$ & $p_i$ &1 & 2& 3 & 4 & 5 & 6 & 7 & 8 & 9 & 10\\ \hline\hline
\multirow{8}{*}{2} &$p_i=0.1i-0.05$ &0.003&0.050&0.271&0.874&2.327&5.918&15.425&45.583&189.511&2254.318\\
                    &$i=1,\ldots,10$&(369)&(418)&(451)&(474)&(490)&(500)&(506) &(509) &(510)&(510)\\\cline{2-12}
                    &$p_i=0.25$ &1.407   &4.245&8.571&14.671&23.111&34.924&52.086&78.900&127.351&254.734\\
                    &$i=1,\ldots,10$&(51)&(60) &(66) &(70)  &(73)  &(75)  &(76)  &(77)  &(77)   &(77)\\\cline{2-12}
                    &$p_i=0.25, i=1,\ldots,8$, &0.579&2.068&4.737&8.976&15.375&24.943&39.576&63.396&107.904&228.447\\
                    & $p_9=p_{10}=0.5$         &(51) &(60) &(66) &(70) &(73)  &(75)  &(76)  &(77)  &(77)   &(77)\\\cline{2-12}
                    &$p_i=0.25, i=1,\ldots,8$, &0.134&0.772&2.808&6.782&13.235&23.067&38.084&62.335&107.267&228.182\\
                    &$p_9=p_{10}=0.75$      &(51) &(60) &(66) &(70) &(73)  &(75)  &(76)  &(77)  &(77)   &(77)\\\hline
\multirow{8}{*}{5} &$p_i=0.1i-0.05$ &0.084&0.667&2.456&6.867&16.860&39.644&96.143&264.684&1011.931&10968.740\\
                    &$i=1,\ldots,10$&(541)&(595)&(631)&(656)&(673)&(684)  &(691) &(693)  &(694)   &(695)\\\cline{2-12}
                    &$p_i=0.25$     &33.944&65.736&99.251&136.645&180.123&232.831&300.169&393.140&540.481&867.679\\
                    &$i=1,\ldots,10$&(79)  &(89)  &(96)  &(100)  &(103)  &(105)  &(107)  &(107)  &(107)  &(107)\\\cline{2-12}
                    &$p_i=0.25, i=1,\ldots,8$, &11.095&28.637&55.155&88.494&129.197&179.639&244.761&335.147&478.848&799.026\\
                    &$p_9=p_{10}=0.5$  &(79)  &(89)  &(96)  &(100) &(103)  &(105)  &(107)  &(107)  &(107)  &(107)\\\cline{2-12}
                    &$p_i=0.25, i=1,\ldots,8$, &1.562&7.117&42.120&81.087&125.032&177.352&243.566&334.577&478.621&798.965\\
                    &$p_9=p_{10}=0.75$ &(79) &(89) &(96)  &(100) &(103)  &(105)  &(107)  &(107)  &(107)&(107)\\\hline
\end{tabular}
\label{Table5}
\end{center}
\end{table}
\end{landscape}
\bigskip

%%%%%%%%%%%%%%%%%%%%%%%%%%%%%%%%%%%%%%%%%%%%%%%%%%%%%%
%%%%%%%%%%%%%%%%%%%%%%%%%%%%%%%%%%%%%%%%%%%%%%%%%%%%%%
\section{Multivariate geometric case}
\label{section3}

In this section we will derive formulas convenient for numerical computation of exact values of moments of order statistics from random vectors with multivariate geometric (MVG) distribution.

Before we give the definition of MVG distribution let us recall that the possibly extended RV 
 $X$ is said to be geometrically distributed with parameter $\pi\in[0,1]$ (denoted by $X\sim ge(\pi)$) if  
$$P(X=k)=\pi(1-\pi)^{k}, \quad k=0,1,2,\ldots,$$
or equivalently if  
$$P(X>k)=(1-\pi)^{k+1}, \quad k=-1,0,1,\ldots.$$
 In particular, if $\pi=1$ then $P(X=0)=1$. If $\pi=0$ then $P(X>k)=1$ for $k=-1,0,1,\ldots$, which means that $X$ is an extended RV with defective distribution and $P(X=\infty)=1$. 

\begin{definition}
\label{MVG}
The random vector $(X_1,X_2,\ldots,X_n)$ has the MVG distribution with parameters $\theta_I\in[0,1]$, $I\in J$,  if
\begin{equation*}
X_i=\min\{M_I, I\ni i\},\quad i=1,2,\ldots,n,
\end{equation*}
where
\begin{itemize}
\item[(a)] $J$ is the class of all nonempty subsets of the set $\{1,2,\ldots,n\}$;
\item[(b)] the RV's $M_I$, $I\in J$, are independent;
\item[(c)] for $I\in J$, the RV $M_I$ is geometrically distributed with parameter $1-\theta_I$;
\item[(d)] for every $i=1,2,\ldots,n$, there exists $I$ such that $i\in I$ and $\theta_I\in[0,1)$.
\end{itemize}
\end{definition}

The above definition was introduced by  Esary and Marshall \cite{EM73} to describe random lifetimes of $n$ unrepairable units. These units, denoted by $U_1, U_2,\ldots, U_n$, are exposed to various  shocks that happen in discrete times (cycles) and may cause units failures. More precisely, during each cycle
\begin{itemize}
\item if the  unit  $U_i$ is still operating, it is exposed to a shock which it survives  with probability $\theta_{\{i\}}$ and does not survive with  probability $1-\theta_{\{i\}}$, $1\leq i\leq n$; 
\item  if any of the  units  $U_{i_1}, U_{i_2}$ is still operating, then these two units are  exposed to a~common shock which all the working units among $\{U_{i_1}, U_{i_2}\}$  survive  with probability $\theta_{\{i_1,i_2\}}$ and do not survive with  probability $1-\theta_{\{i_1,i_2\}}$, $1\leq i_1<i_2\leq n$;
\item   if any of the  units  $U_{i_1}, U_{i_2}, U_{i_3}$ is still operating, then these three units are  exposed to a~common shock which all the working units among $\{U_{i_1}, U_{i_2}, U_{i_3}\}$  survive  with probability $\theta_{\{i_1,i_2,i_3\}}$ and do not survive with  probability $1-\theta_{\{i_1,i_2,i_3\}}$, $1\leq i_1<i_2<i_3\leq n$;
$$\vdots$$
\item  if any of the  $n$ units  $U_1, U_2,\ldots, U_n$ is still operating, then all the units are  exposed to a~common shock which the working units   survive  with probability $\theta_{\{1,2,\ldots,n\}}$ and do not survive with  probability $1-\theta_{\{1,2,\ldots,n\}}$.
\end{itemize}
Moreover, it is assumed that before the first cycle all the units are in the working state and that different shocks affects operation of units independently. If $X_i$ denote the number of cycles which the unit $U_i$ survived, $1\leq i\leq n$, then the random vector $(X_1,X_2,\ldots,X_n)$ has the MVG distribution with parameters $\theta_I$, $I\in J$.

Note that Condition (d) of Definition \ref{MVG} ensures that each of the $n$ units will finally break down with probability one. In other words, each RV among $X_1,X_2,\ldots,X_n$ has a~non-defective distribution.

 Esary and Marshall \cite{EM73} gave some properties of the MVG distribution while in \cite{DD19} and \cite{D18} $k$-out-of-$n$ systems with components having some special MVG lifetimes were studied. 
Below we present further properties of the MVG distribution. These will be needed later on in this section to establish  closed-form formulas for factorial moments of order statistics from random vectors with  the MVG distribution.  Furthermore,  these will prove useful in Section \ref{section4} to examine times to failures of coherent systems consisting of elements with MVG lifetimes.
Here, and subsequently, $E\left(Y\right)_p$ denotes the $p$th factorial moment of a RV $Y$, i.e., for $p=1,2,\ldots$,
$$E\left(Y\right)_p=E\big(Y(Y-1)\cdots(Y-p+1)\big)$$
and, for $S=\{l_1,\ldots,l_m\}\subset\{1,\ldots,n\}$, $1\leq m\leq n$, 
$$X_{1:S}=\min\{X_{l_1},\ldots,X_{l_m}\}.$$

\begin{theorem}\label{MVG_properties} 
%Let $J$ be the class of all nonempty subsets of $\{1,2,\ldots,n\}$.
Let the random vector $(X_1,X_2,\ldots,X_n)$ have the MVG distribution with parameters $\theta_I\in [0,1]$, $\emptyset\neq I\subset \{1,2,\ldots,n\}$.
\begin{itemize}
\item[(i)] For $k_1,k_2,\ldots,k_n\in\{-1,0,1,\ldots\}$,
\begin{eqnarray}
\hspace{-5mm}P(X_1>k_1,X_2>k_2,\ldots,X_n>k_n)&=&\prod_{\emptyset\neq I\subset \{1,2,\ldots,n\}}\theta_{I}^{\max\{k_i, i\in I\}+1}  \label{sfMVG} \\  
&=& \prod\limits_{i=1}^n\prod\limits_{1\leq j_1<\ldots<j_i\leq n}\theta_{\{j_1,\ldots,j_i\}}^{\max\{k_{j_1},\ldots,k_{j_i}\}+1}. \nonumber
\end{eqnarray}
\item[(ii)] $X_{1:n}\sim ge\left(1-\prod\limits_{\emptyset\neq I\subset \{1,2,\ldots,n\}}\theta_I\right).$
\item[(iii)]If $1\leq s\leq n$, $1\leq l_1<l_2<\cdots<l_s\leq n$, and $\tilde{J}$ denotes the family of all subsets (along with the empty set) of the set $\{1,2,\ldots,n\}\setminus\{l_1,\ldots,l_s\}$, then the random vector $(X_{l_1},X_{l_2},\ldots,X_{l_s})$ has the MVG distribution with parameters
\begin{equation*}
\hat{\theta}_I=\prod\limits_{\tilde{I}\in\tilde{J}}\theta_{I\cup\tilde{I}}, \quad \emptyset\neq I\subset\{l_1,l_2,\ldots,l_s\}.
\end{equation*}
\item[(iv)] Under the assumptions of Part (iii) we have 
\begin{equation}
\label{MinSubset}
X_{1:\{l_1,l_2,\ldots,l_s\}} \sim ge\left(
1-\prod_{I\subset\{1,\ldots,n\},  I\cap\{l_1,\ldots,l_s\}\neq \emptyset} \theta_I \right)
\end{equation}
and, in consequence, for $p=1,2,\ldots$,
\begin{equation}
\label{factorialMin}
E\left(X_{1:\{l_1,l_2,\ldots,l_s\}}\right)_p
=p!\left(\frac{\theta}{1-\theta}\right)^{p},   
\end{equation}
where $ \theta=\prod_{I\subset\{1,\ldots,n\},  I\cap\{l_1,\ldots,l_s\}\neq \emptyset} 
  \theta_I$.
\end{itemize}
\end{theorem}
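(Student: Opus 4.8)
The plan is to establish the four parts in sequence, each feeding into the next. For Part (i), the starting observation is that $\{X_i>k_i\}=\{M_I>k_i \text{ for every } I\ni i\}$, since $X_i$ is the minimum of those $M_I$ by Definition \ref{MVG}. Intersecting over $i=1,\ldots,n$, the event $\{X_1>k_1,\ldots,X_n>k_n\}$ imposes on each $M_I$ the conjunction of constraints $\{M_I>k_i:i\in I\}$, which is simply $\{M_I>\max_{i\in I}k_i\}$. Using the independence of the $M_I$ and the survival function $P(M_I>k)=\theta_I^{k+1}$ of $M_I\sim ge(1-\theta_I)$, I would factor the joint probability as $\prod_{\emptyset\neq I}\theta_I^{\max\{k_i:i\in I\}+1}$, which is (\ref{sfMVG}); the second displayed equality is a mere re-indexing of the product by $|I|$. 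Part (ii) is then immediate by specializing (i) to $k_1=\cdots=k_n=k$: this gives $P(X_{1:n}>k)=\big(\prod_{\emptyset\neq I}\theta_I\big)^{k+1}$, the survival function of a $ge\big(1-\prod_{\emptyset\neq I}\theta_I\big)$ variable.

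For Part (iii), the idea is to read the joint survival function of the subvector $(X_{l_1},\ldots,X_{l_s})$ straight off (\ref{sfMVG}). Since each $X_i\geq 0$, setting $k_i=-1$ for every $i\notin\{l_1,\ldots,l_s\}$ makes those marginal constraints vacuous, so $P(X_{l_1}>k_{l_1},\ldots,X_{l_s}>k_{l_s})$ equals the right-hand side of (\ref{sfMVG}) at this choice. Any subset $I$ that misses $\{l_1,\ldots,l_s\}$ then carries exponent $\max\{k_i:i\in I\}+1=0$ and drops out, whereas any $I$ meeting $\{l_1,\ldots,l_s\}$ factors uniquely as $I=I'\cup\tilde{I}$ with $\emptyset\neq I'=I\cap\{l_1,\ldots,l_s\}$ and $\tilde{I}\in\tilde{J}$, and its exponent reduces to $\max\{k_i:i\in I'\}+1$. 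Grouping, for each fixed $I'$, the factors sharing that exponent turns the product into $\prod_{\emptyset\neq I'\subset\{l_1,\ldots,l_s\}}\hat{\theta}_{I'}^{\max\{k_i:i\in I'\}+1}$, which is exactly the MVG survival form of (i) for a length-$s$ vector with parameters $\hat{\theta}_{I'}$; non-defectiveness of the marginals $X_{l_j}$, inherited from the full vector, secures Condition (d). I expect the main point needing care to be this regrouping: establishing the bijection $I\leftrightarrow(I',\tilde{I})$ and verifying that the exponents collapse consistently.

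Finally, Part (iv) combines the earlier parts. Applying (ii) to the subvector, which by (iii) is MVG with parameters $\hat{\theta}_{I'}$, gives $X_{1:\{l_1,\ldots,l_s\}}\sim ge\big(1-\prod_{\emptyset\neq I'\subset\{l_1,\ldots,l_s\}}\hat{\theta}_{I'}\big)$. The product $\prod_{I'}\hat{\theta}_{I'}=\prod_{\emptyset\neq I'}\prod_{\tilde{I}\in\tilde{J}}\theta_{I'\cup\tilde{I}}$ ranges, via the same bijection, over precisely the subsets of $\{1,\ldots,n\}$ that intersect $\{l_1,\ldots,l_s\}$, each once, so it equals $\theta=\prod_{I\cap\{l_1,\ldots,l_s\}\neq\emptyset}\theta_I$; this proves (\ref{MinSubset}). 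For (\ref{factorialMin}) I would invoke the standard factorial moments of a geometric law: if $Y\sim ge(\pi)$, its probability generating function is $G(z)=\pi/(1-(1-\pi)z)$, whence $E(Y)_p=G^{(p)}(1)=p!\,\big((1-\pi)/\pi\big)^p$. Substituting $\pi=1-\theta$, so that $1-\pi=\theta$, yields $E(X_{1:\{l_1,\ldots,l_s\}})_p=p!\,(\theta/(1-\theta))^p$, as required.
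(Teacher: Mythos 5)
Your proposal is correct and follows essentially the same route as the paper's proof: the min-representation and independence of the $M_I$'s for (i), the specialization $k_1=\cdots=k_n=k$ for (ii), the substitution $k_i=-1$ off $\{l_1,\ldots,l_s\}$ together with the regrouping of the product via $I=\big(I\cap\{l_1,\ldots,l_s\}\big)\cup\tilde{I}$ for (iii), and the combination of (ii)--(iii) with the standard geometric factorial-moment formula for (iv). Your explicit verification of Condition (d) for the subvector and your probability-generating-function derivation of $E(Y)_p=p!\left(\frac{1-\pi}{\pi}\right)^p$ are details the paper leaves implicit, but they do not constitute a different approach.
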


\begin{proof}
(i) Let $J$ be as in Definition \ref{MVG}. Then, for $k_{1},  \ldots,k_{n}\in\{-1,0,1,\ldots\}$, 
\begin{eqnarray*}
P(X_1>k_1,\ldots,X_n>k_n)&=&P\left(\min\{M_I: I\ni i\}>k_i, i=1,2,\ldots,n\right)   \\
&=&P\left(M_I>\max\{k_i,i\in I\} \textrm{ for every  } I\in J\right)  \\
&=&\prod\limits_{I\in J}P\left(M_I>\max\{k_i,i\in I\}\right)  \\
&=&\prod\limits_{I\in J}\theta_I^{\max\{k_i,i\in I\}+1} \\
&=&\prod\limits_{i=1}^n\prod\limits_{1\leq j_1<\ldots<j_i\leq n}\theta_{\{j_1,\ldots,j_i\}}^{\max\{k_{j_1},\ldots,k_{j_i}\}+1}
\end{eqnarray*}
as required.

\medskip\noindent 
(ii) Since $P(X_{1:n}>k)=P(X_1>k,X_2>k,\ldots,X_n>k)$, 
(\ref{sfMVG})  immediately implies  
\begin{eqnarray*}
P(X_{1:n}>k)&=&\prod_{\emptyset\neq I\subset \{1,2,\ldots,n\}}\theta_I^{\max\{k,\ldots,k\}+1} \\
&=&\left(\prod_{\emptyset\neq I\subset \{1,2,\ldots,n\}}\theta_I\right)^{k+1},\quad   k=-1,0,1,\ldots,
\end{eqnarray*}
which gives the desired conclusion. 

\medskip\noindent 
(iii) Let $k_i=-1$ if $i\notin\{l_1,\ldots,l_s\}$. Then, for $k_{l_1},  \ldots,k_{l_s}\in\{-1,0,1,\ldots\}$, 
\begin{eqnarray}
P\left(X_{l_1}>k_{l_1},\ldots,X_{l_s}>k_{l_s}\right)&=& P\left(X_i>k_i, i=1,2,\ldots,n\right)\nonumber\\
&=&\prod_{\emptyset\neq I\subset\{1,2,\ldots,n\}}\theta_I^{\max\{k_i,i\in I\}+1}   \nonumber\\
&=&\prod_{I\subset\{1,2,\ldots,n\},I\cap\{l_1,\ldots,l_s\}\neq \emptyset}\
\theta_I^{\max\{k_i, i\in I\cap\{l_1,\ldots,l_s\} \}+1}\nonumber\\
&=&\prod_{\emptyset\neq I\subset\{l_1,\ldots,l_s\}}\left(\prod\limits_{\tilde{I}\in\tilde{J}}\theta_{I\cup\tilde{I}}\right)^{\max\{k_{i}, i\in I\}+1}, \label{rozk_brzeg}
\end{eqnarray}
where the second equality is a consequence of (\ref{sfMVG}). Comparing (\ref{rozk_brzeg}) with (\ref{sfMVG}) gives the assertion of (iii).

\medskip\noindent 
(iv) By (ii) and (iii), $X_{1:\{l_1,l_2,\ldots,l_s\}}$ has the geometric distribution with parameter $1-\theta$, where
$$\theta=\prod\limits_{\emptyset\neq I\subset \{l_1,l_2,\ldots,l_s\}}\hat{\theta}_I
=\prod\limits_{\emptyset\neq I\subset \{l_1,l_2,\ldots,l_s\}}  \prod\limits_{\tilde{I}\in\tilde{J}}\theta_{I\cup\tilde{I}}
=\prod_{I\subset\{1,\ldots,n\},  I\cap\{l_1,\ldots,l_s\}\neq \emptyset} \theta_I$$
as claimed in  (\ref{MinSubset}). Relation (\ref{factorialMin}) is an immediate consequence of the well known formula for factorial moments of the geometric distribution.

\end{proof}

In particular, Theorem \ref{MVG_properties} (iii) asserts that univariate marginal distributions of the MVG distribution are geometric. More precisely, if the random vector $(X_1,X_2,\ldots,X_n)$ has the MVG distribution with parameters $\theta_I\in [0,1]$, $\emptyset\neq I\subset \{1,2,\ldots,n\}$, then 
$$X_i\sim ge\left(1-\prod\limits_{\{i\}\subset I\subset\{1,2,\ldots,n\}}\theta_I  \right), \quad i=1,2,\ldots,n.$$
 If moreover, $\theta_I=1$ for all $\emptyset\neq I\subset \{1,2,\ldots,n\}$ except singletons, then $X_1,X_2,\ldots,X_n$ are independent and $X_i\sim ge\left(1-\theta_{\{i\}}\right)$, $i=1,2,\ldots,n$.

Theorem \ref{MVG_properties} (iv)  provides  formulas for factorial moments of the smallest order statistic from  random vector $(X_1,X_2,\ldots,X_n)$ with MVG distribution. To extend these formulas to larger  order statistics we will use the following result by Balakrishnan et. al. \cite{BBM92}. For some other interesting relations for cumulative distribution functions of order statistics we refer the reader to \cite{E13}.

\begin{theorem}
Let the  random vector $(X_1,X_2,\ldots,X_n)$ have any joint distribution and $1\leq r\leq n$. By $F_{r:n}$ and $F_{1:S}$, $S\subset\{1,2,\ldots,n\}$,  denote the cumulative distribution functions of $X_{r:n}$ and $X_{1:S}$, respectively. Then, for all $x$,
\begin{equation}
\label{recc}
F_{r:n}(x)=\sum_{j=0}^{r-1} (-1)^{r-1-j} {{n-j-1} \choose {n-r}} \sum_{S \subset \{1,\ldots,n\}, |S|=n-j} F_{1:S}(x),
\end{equation}
where $|S|$ stands for the number of elements of the set $S$.
\end{theorem}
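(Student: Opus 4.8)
The plan is to fix a real number $x$ and translate the identity into a statement about the random count $N=N(x)=\sum_{i=1}^n\mathbbm{1}(X_i\le x)$ of variables that do not exceed $x$. Since exactly $N$ of the $X_i$ are $\le x$, we have $X_{r:n}\le x$ if and only if $N\ge r$, so that $F_{r:n}(x)=P(N\ge r)=E\,\mathbbm{1}(N\ge r)$. The whole proof will rest on rewriting the right-hand side of (\ref{recc}) as an expectation of this same indicator, and the key to doing so is to count subsets rather than to manipulate probabilities directly.

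First I would reindex the outer sum by $k=n-j$ and set $a=n-r+1$, so that $(-1)^{r-1-j}=(-1)^{k-a}$ and $\binom{n-j-1}{n-r}=\binom{k-1}{a-1}$; the right-hand side of (\ref{recc}) becomes $\sum_{k=a}^{n}(-1)^{k-a}\binom{k-1}{a-1}\sum_{|S|=k}F_{1:S}(x)$. Next I would observe that, for a fixed outcome, a set $S$ satisfies $\min_{i\in S}X_i\le x$ exactly when $S$ is not contained in the exceedance set $\{i:X_i>x\}$, which has $n-N$ elements; hence the number of $k$-element subsets $S$ with $\min_{i\in S}X_i\le x$ equals $\binom{n}{k}-\binom{n-N}{k}$. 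Taking expectations gives $\sum_{|S|=k}F_{1:S}(x)=\binom{n}{k}-E\binom{n-N}{k}$, and substituting this reduces the right-hand side of (\ref{recc}) to $T(n)-E\,T(n-N)$, where $T(m)=\sum_{k=a}^{n}(-1)^{k-a}\binom{k-1}{a-1}\binom{m}{k}$.

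Everything now hinges on the purely combinatorial evaluation $T(m)=\mathbbm{1}(m\ge a)$ for every integer $0\le m\le n$, which I expect to be the main obstacle. I would prove it using the upper-negation identity $(-1)^{k-a}\binom{k-1}{a-1}=\binom{-a}{k-a}$, after which $T(m)=\sum_{l\ge 0}\binom{-a}{l}\binom{m}{(m-a)-l}=\binom{m-a}{m-a}=1$ by Vandermonde's convolution whenever $m\ge a$, while every term vanishes when $m<a$; both cases give $T(m)=\mathbbm{1}(m\ge a)$. Applying this with $m=n$ (so $T(n)=1$, since $a=n-r+1\le n$) and with the random value $m=n-N$ (so $T(n-N)=\mathbbm{1}(n-N\ge a)=\mathbbm{1}(N\le r-1)$) yields $T(n)-E\,T(n-N)=1-P(N\le r-1)=P(N\ge r)=F_{r:n}(x)$, which is exactly (\ref{recc}). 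The only delicate points are the bookkeeping of the reindexing and the range of validity of Vandermonde's identity, both of which are routine once the subset-counting step is in place.
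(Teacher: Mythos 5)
Your proof is correct and complete. One thing to be aware of: the paper does not prove this theorem at all --- it is imported verbatim from Balakrishnan, Bendre and Malik \cite{BBM92} precisely so that the recurrence can be invoked for factorial moments, so there is no internal proof to compare against, and your argument supplies a self-contained derivation. All three steps check out: (i) the reindexing $k=n-j$, $a=n-r+1$ correctly turns the right-hand side into $\sum_{k=a}^{n}(-1)^{k-a}\binom{k-1}{a-1}\sum_{|S|=k}F_{1:S}(x)$; (ii) the pointwise count $\sum_{|S|=k}\mathbbm{1}\left(\min_{i\in S}X_i\le x\right)=\binom{n}{k}-\binom{n-N}{k}$ holds for every outcome and every joint distribution (ties, dependence and discreteness are irrelevant, which is exactly the generality the theorem claims), and taking expectations is legitimate because the sum over $S$ is finite; (iii) the evaluation $T(m)=\mathbbm{1}(m\ge a)$ for integers $0\le m\le n$ is right: upper negation gives $(-1)^{k-a}\binom{k-1}{a-1}=\binom{-a}{k-a}$, the truncation of the sum at $k=n$ loses nothing since $\binom{m}{k}=0$ for $k>m$ and $m\le n$, Vandermonde then yields $\binom{m-a}{m-a}=1$ when $m\ge a$, while for $m<a$ every term carries $\binom{m}{k}=0$ because $k\ge a>m$. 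Substituting $m=n$ and the random value $m=n-N$, together with $F_{r:n}(x)=P(N\ge r)$, closes the argument. Conceptually, what you have done is a hands-on instance of Jordan's sieve: applying the classical identity $P(\bar N\ge a)=\sum_{k\ge a}(-1)^{k-a}\binom{k-1}{a-1}\sum_{|S|=k}P\left(\bigcap_{i\in S}\{X_i>x\}\right)$ to the exceedance events and rewriting $P\left(\bigcap_{i\in S}\{X_i>x\}\right)=1-F_{1:S}(x)$ reproduces the stated formula, with your computation $T(n)=1$ absorbing the $\binom{n}{k}$ correction terms; your route avoids citing the sieve by reducing everything to a single Vandermonde evaluation, at the modest cost of the $T(m)$ bookkeeping, and it makes transparent why no assumption whatsoever on the joint law of $(X_1,\ldots,X_n)$ is needed.
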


Relation (\ref{recc}) can be rewritten in terms of probability mass functions and hence in terms of factorial moments, provided they exist. For $1\leq r\leq n$ and $p=1,2,\ldots$, we get 
\begin{equation}
\label{reccfac}
 E\left(X_{r:n}\right)_p=\sum_{j=0}^{r-1} (-1)^{r-1-j} 
{{n-j-1} \choose {n-r}}\sum_{S \subset \{1,\ldots,n\},|S|=n-j} E\left(X_{1:S}\right)_p.
\end{equation}
Combining Theorem \ref{MVG_properties} (iv)   with (\ref{reccfac}) we obtain the main result of this section.

\begin{theorem}
\label{MVGmoments}
Under the assumption of Theorem \ref{MVG_properties}, for $1\leq r\leq n$ and $p=1,2,\ldots$, 
\begin{equation}
\label{MVGfac}
E\left(X_{r:n}\right)_p 
=p!\sum_{j=0}^{r-1} (-1)^{r-1-j} {{n-j-1} \choose {n-r}} S_{j,p},   
\end{equation}
where
$$S_{0,p}=\left(\frac{1}{1-\theta_{all}}-1  \right)^p$$
and, for $1\leq j\leq n-1$,
$$S_{j,p}=\sum_{\{s_1,\ldots,s_{j}\}\subset\{1,\ldots,n\}} \left(\frac{1}{1-\frac{\theta_{all}}{\prod\limits_{\emptyset\neq I\subset \{s_1,\ldots,s_{j}\}}\theta_I}}-1  \right)^p$$
with 
\begin{equation}
\label{teta.all}
\theta_{all}=\prod\limits_{\emptyset\neq I\subset \{1,2,\ldots,n\}}\theta_I.
\end{equation}
In particular,
\begin{equation}
\label{MVGexp}
E\left(X_{r:n}\right)=\sum_{j=0}^{r-1} (-1)^{r-1-j} {{n-j-1} \choose {n-r}} S_{j,1}
\end{equation}
and
\begin{eqnarray}
\label{MVGvar}
&&\hspace{-7mm}Var\left(X_{r:n}\right) =E\left(X_{r:n}(X_{r:n}-1) \right)+E\left(X_{r:n}\right)-\big(E\left(X_{r:n}\right)\big)^2     \nonumber \\
&=& 2\sum_{j=0}^{r-1} (-1)^{r-1-j} {{n-j-1} \choose {n-r}} S_{j,2}+E\left(X_{r:n}\right)\left(1-E\left(X_{r:n}\right)\right).
\end{eqnarray}
\end{theorem}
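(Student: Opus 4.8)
The plan is to combine the general identity~(\ref{reccfac}) with the explicit factorial-moment formula~(\ref{factorialMin}) from Theorem~\ref{MVG_properties}(iv). The backbone of the argument is already assembled in the excerpt, so the proof is essentially a matter of specializing the abstract sum $\sum_{S\subset\{1,\ldots,n\},|S|=n-j} E(X_{1:S})_p$ to the MVG case and re-indexing it into the stated quantities $S_{j,p}$. First I would fix $1\le r\le n$ and $p\ge 1$ and write down~(\ref{reccfac}); the only nontrivial ingredient is an evaluation of $E(X_{1:S})_p$ for an arbitrary nonempty subset $S=\{l_1,\ldots,l_s\}$. By Theorem~\ref{MVG_properties}(iv) this equals $p!\bigl(\theta/(1-\theta)\bigr)^p$, where $\theta=\prod_{I\cap S\neq\emptyset}\theta_I$, and I would rewrite $\theta/(1-\theta)=1/(1-\theta)-1$ so that each summand matches the shape appearing in $S_{j,p}$.

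The key bookkeeping step is the translation between the index $j$ in~(\ref{reccfac}) and the complementary-set description used in $S_{j,p}$. In~(\ref{reccfac}) one sums over subsets $S$ with $|S|=n-j$; writing $S=\{1,\ldots,n\}\setminus\{s_1,\ldots,s_j\}$, the complement $\{s_1,\ldots,s_j\}$ ranges over all $j$-element subsets, which is exactly the index set of $S_{j,p}$. With this identification I would compute the relevant product $\theta=\prod_{I\cap S\neq\emptyset}\theta_I$ by passing to its complement inside $\theta_{all}$: since $\theta_{all}=\prod_{\emptyset\neq I\subset\{1,\ldots,n\}}\theta_I$ factors as the product over $I$ meeting $S$ times the product over $I$ disjoint from $S$, and the latter is precisely $\prod_{\emptyset\neq I\subset\{s_1,\ldots,s_j\}}\theta_I$, I obtain
$$\theta=\frac{\theta_{all}}{\prod_{\emptyset\neq I\subset\{s_1,\ldots,s_j\}}\theta_I}.$$
Substituting this into $1/(1-\theta)-1$ reproduces the summand defining $S_{j,p}$ for $1\le j\le n-1$. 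The boundary case $j=0$ corresponds to $S=\{1,\ldots,n\}$, where every nonempty $I$ meets $S$, so $\theta=\theta_{all}$ and the single summand gives $S_{0,p}=\bigl(1/(1-\theta_{all})-1\bigr)^p$, as stated.

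Assembling these pieces, each inner sum $\sum_{|S|=n-j}E(X_{1:S})_p$ becomes $p!\,S_{j,p}$, and factoring the common $p!$ out of~(\ref{reccfac}) yields~(\ref{MVGfac}). The specialization to~(\ref{MVGexp}) is immediate by setting $p=1$. For~(\ref{MVGvar}) I would use the standard identity $\mathrm{Var}(Y)=E(Y(Y-1))+E(Y)-(E(Y))^2$, recognize $E(X_{r:n}(X_{r:n}-1))=E(X_{r:n})_2$, and apply~(\ref{MVGfac}) with $p=2$ to the factorial moment while leaving $E(X_{r:n})$ in closed form via~(\ref{MVGexp}). The main obstacle, such as it is, is the disjoint-versus-intersecting partition of the subsets $I$ relative to $S$ and its complement: one must verify carefully that the product over subsets $I$ disjoint from $S$ is exactly $\prod_{\emptyset\neq I\subset\{s_1,\ldots,s_j\}}\theta_I$, since a subset $I$ is disjoint from $S$ precisely when $I\subset\{1,\ldots,n\}\setminus S=\{s_1,\ldots,s_j\}$. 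Everything else is routine re-indexing and substitution.
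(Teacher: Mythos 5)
Your proposal is correct and takes essentially the same route as the paper's proof: both specialize (\ref{reccfac}) by evaluating $E\left(X_{1:S}\right)_p$ via Theorem \ref{MVG_properties}(iv), factor $\theta_{all}$ as the product over subsets meeting $S$ times the product over nonempty subsets of the complement, and re-index the sum over $(n-j)$-element sets $S$ by their $j$-element complements to obtain $S_{j,p}$. Your handling of the boundary case $j=0$ and the derivation of (\ref{MVGexp}) and (\ref{MVGvar}) likewise match the paper's argument.
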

\begin{proof}
For $0\leq j\leq n-1$,
\begin{eqnarray}
&&\sum_{S \subset \{1,\ldots,n\},|S|=n-j} E\left(X_{1:S}\right)_p=\sum_{\{s_1,\ldots,s_{n\!-\!j}\}\subset\{1,\ldots,n\}}
E\left(X_{1:\{s_1,\ldots,s_{n-j}\}}\right)_p  \nonumber \\
&=&p!\sum_{\{s_1,\ldots,s_{n-j}\}\subset\{1,\ldots,n\}}  
\left(\frac{1}{1-\prod\limits_{I\subset \{1,\ldots,n\},I\cap  \{s_1,\ldots,s_{n-j}\}\neq\emptyset}\theta_I}-1  \right)^p. 
  \label{thmMVG1}
\end{eqnarray}
by Theorem \ref{MVG_properties} (iv). 
But, for $j=0$ and $\{s_1,\ldots,s_{n}\}\subset \{1,\ldots,n\}$,
\begin{equation}
\label{thmMVG2}
\prod\limits_{I\subset \{1,\ldots,n\},I\cap  \{s_1,\ldots,s_{n-j}\}\neq\emptyset}\theta_I = 
\prod\limits_{\emptyset\neq I\subset \{1,\ldots,n\}}\theta_I=\theta_{all},
\end{equation}
while for $1\leq j\leq n-1$ and $\{s_1,\ldots,s_{n-j}\}\subset \{1,\ldots,n\}$,
\begin{eqnarray}
\prod\limits_{I\subset \{1,\ldots,n\},I\cap  \{s_1,\ldots,s_{n-j}\}\neq\emptyset}\theta_I
&=&\frac{\prod\limits_{\emptyset\neq I\subset \{1,\ldots,n\}}\theta_I}{
\prod\limits_{\emptyset\neq I\subset \{1,\ldots,n\}\setminus \{s_1,\ldots,s_{n-j}\}}\theta_I} \nonumber \\
&=&\frac{\theta_{all}}{\prod\limits_{\emptyset\neq I\subset \{1,\ldots,n\}\setminus \{s_1,\ldots,s_{n-j}\}}\theta_I}. \label{thmMVG3}
\end{eqnarray}
Substituting (\ref{thmMVG2}) and (\ref{thmMVG3}) into  (\ref{thmMVG1}), and noticing that, for $1\leq j\leq~n-~1$,
\begin{eqnarray*}
\sum_{\{s_1,\ldots,s_{n-j}\}\subset\{1,\ldots,n\}}  
\left(\frac{1}{1-\frac{\theta_{all}}{\prod\limits_{\emptyset\neq I\subset \{1,\ldots,n\}\setminus \{s_1,\ldots,s_{n-j}\}}\theta_I}}-1  \right)^p && \\
=\sum_{\{s_1,\ldots,s_{j}\}\subset\{1,\ldots,n\}} \left(\frac{1}{1-\frac{\theta_{all}}{\prod\limits_{\emptyset\neq I\subset \{s_1,\ldots,s_{j}\}}\theta_I}}-1  \right)^p=S_{j,p}, &&
\end{eqnarray*}
we get 
\begin{eqnarray*}
&&\sum_{\{s_1,\ldots,s_{n-j}\}\subset\{1,\ldots,n\}}
E\left(X_{1:\{s_1,\ldots,s_{n-j}\}}\right)_p \\
&=&\left\{
\begin{array}{lcl}
p!\left(\frac{1}{1-\theta_{all}}-1  \right)^p=p!S_{0,p} & \hbox{ if } &  j= 0 \\
p!S_{j,p}  & \hbox{ if } &  1\leq j\leq n-1
\end{array}
\right.. 
\end{eqnarray*}
Now application of (\ref{reccfac}) finishes the proof.
\end{proof}

Applying (\ref{MVGexp}) and  (\ref{MVGvar}) to the case when $X_1, X_2, \ldots, X_n$ are independent, i.e., $\theta_I=1$ for all $\emptyset\neq I\subset \{1,2,\ldots,n\}$ except singletons, we recover known formulas for the expectation and variance of the $r$th order statistic from INID geometric RV's \citep[formulas (4.11) and (4.13)]{DD18}.

Furthermore, under the additional assumption that the random vector $(X_1, X_2, \ldots, X_n)$ is exchangeable, Theorem \ref{MVGmoments} takes on the following simpler form. In its formulation  we  adopt the convention that 
\begin{equation}
\label{konw}
{{j} \choose {s}}=0 \; \hbox{  if } \; s>j\geq 1.
\end{equation}
\begin{corollary}
\label{exchMVGmoments}
Let the random vector $(X_1,X_2,\ldots,X_n)$ be exchangeable and have the MVG distribution with parameters $\theta_I\in [0,1]$, $\emptyset\neq I\subset \{1,2,\ldots,n\}$, where
\begin{equation}
\label{cMVGexch}
\theta_{\{i_1,\ldots,i_s\}}=\theta_s\in[0,1] \quad \hbox{ for all } 1\leq s\leq n \hbox{ and } 1\leq i_1<\cdots<i_s\leq n
\end{equation}
and at least one $\theta_s$, $1\leq s\leq n$, be not equal to 1. Then, for $1\leq r\leq n$ and $p=1,2,\ldots$,
$$
 E\left(X_{r:n}\right)_p   
=p!\sum_{j=0}^{r-1} (-1)^{r-1-j} {{n-j-1} \choose {n-r}}{{n} \choose {j}}\left(\frac{1}{1-\prod\limits_{s=1}^n \theta_s^{{{n} \choose {s}}-{{j} \choose {s}}}} -1\right)^p. 
$$
In particular,
$$
E\left(X_{r:n}\right)=\sum_{j=0}^{r-1} (-1)^{r-1-j} {{n-j-1} \choose {n-r}}{{n} \choose {j}}\left(\frac{1}{1-\prod\limits_{s=1}^n \theta_s^{{{n} \choose {s}}-{{j} \choose {s}}}} -1\right)
$$
and
\begin{eqnarray*}
Var\left(X_{r:n}\right)&=&2\sum_{j=0}^{r-1} (-1)^{r-1-j} {{n-j-1} \choose {n-r}}{{n} \choose {j}}\left(\frac{1}{1-\prod\limits_{s=1}^n \theta_s^{{{n} \choose {s}}-{{j} \choose {s}}}} -1\right)^2 \\
&+&E\left(X_{r:n}\right)\left(1-E\left(X_{r:n}\right)\right).
\end{eqnarray*}
\end{corollary}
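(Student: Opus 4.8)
The plan is to specialize Theorem~\ref{MVGmoments} by exploiting the symmetry condition~(\ref{cMVGexch}), under which $\theta_I$ depends on $I$ only through its cardinality $|I|$. The central observation is that every product over subsets appearing in the quantities $S_{j,p}$ of Theorem~\ref{MVGmoments} collapses to a product over cardinalities, each factor being raised to a power equal to the number of subsets of the relevant size. This is precisely the mechanism that will convert the sum over $j$-element subsets into a plain multiplication by $\binom{n}{j}$.

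First I would rewrite the two products that feature in $S_{j,p}$. Since there are exactly $\binom{n}{s}$ subsets of $\{1,\ldots,n\}$ of size $s$, definition~(\ref{teta.all}) together with~(\ref{cMVGexch}) gives
\[
\theta_{all}=\prod_{\emptyset\neq I\subset\{1,\ldots,n\}}\theta_I=\prod_{s=1}^n\theta_s^{\binom{n}{s}}.
\]
Likewise, for any fixed $j$-element subset $\{s_1,\ldots,s_j\}$, the number of its nonempty $s$-element subsets is $\binom{j}{s}$, so
\[
\prod_{\emptyset\neq I\subset\{s_1,\ldots,s_j\}}\theta_I=\prod_{s=1}^n\theta_s^{\binom{j}{s}},
\]
where the convention~(\ref{konw}) that $\binom{j}{s}=0$ for $s>j\geq 1$ lets me run the product up to $s=n$ without changing its value. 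Dividing, the ratio inside $S_{j,p}$ becomes
\[
\frac{\theta_{all}}{\prod_{\emptyset\neq I\subset\{s_1,\ldots,s_j\}}\theta_I}=\prod_{s=1}^n\theta_s^{\binom{n}{s}-\binom{j}{s}},
\]
which is exactly the quantity occurring in the statement and, crucially, does not depend on which particular $j$-element subset was chosen.

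Because the summand in $S_{j,p}$ is now constant over all $\binom{n}{j}$ choices of $\{s_1,\ldots,s_j\}$, the sum reduces to
\[
S_{j,p}=\binom{n}{j}\left(\frac{1}{1-\prod_{s=1}^n\theta_s^{\binom{n}{s}-\binom{j}{s}}}-1\right)^p.
\]
I would then check that this single expression also covers the boundary term $j=0$: with $\binom{0}{s}=0$ for $s\geq 1$ and $\binom{n}{0}=1$ the right-hand side collapses to $\big((1-\theta_{all})^{-1}-1\big)^p=S_{0,p}$, matching the definition of $S_{0,p}$ in Theorem~\ref{MVGmoments}. Substituting the uniform formula for $S_{j,p}$ into~(\ref{MVGfac}) yields the claimed expression for $E(X_{r:n})_p$, and the asserted formulas for $E(X_{r:n})$ and $Var(X_{r:n})$ follow by setting $p=1$ in it and by combining its $p=2$ instance with~(\ref{MVGvar}).

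I do not expect any genuine difficulty here: the argument is a direct computation once Theorem~\ref{MVGmoments} is available. The only point requiring care, the main if minor obstacle, is the combinatorial bookkeeping of the exponents, namely recognizing that counting subsets by size produces the binomial powers $\binom{n}{s}$ and $\binom{j}{s}$, and verifying that the convention~(\ref{konw}) makes the $j=0$ case fit seamlessly into the general formula rather than needing separate treatment.
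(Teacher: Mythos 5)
Your proposal is correct and takes essentially the same route as the paper's own proof: you count subsets by cardinality to obtain $\theta_{all}=\prod_{s=1}^n\theta_s^{\binom{n}{s}}$ and $\prod_{\emptyset\neq I\subset\{s_1,\ldots,s_j\}}\theta_I=\prod_{s=1}^n\theta_s^{\binom{j}{s}}$ (via the convention (\ref{konw})), conclude that the summand in $S_{j,p}$ is constant over all $\binom{n}{j}$ subsets so that $S_{j,p}=\binom{n}{j}\bigl(\bigl(1-\prod_{s=1}^n\theta_s^{\binom{n}{s}-\binom{j}{s}}\bigr)^{-1}-1\bigr)^p$, and substitute into (\ref{MVGfac}), exactly as the paper does. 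Your check that the $j=0$ boundary term agrees with $S_{0,p}$ mirrors the paper's treatment, and the specializations to $E(X_{r:n})$ and $Var(X_{r:n})$ via $p=1,2$ and (\ref{MVGvar}) are likewise identical.
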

\begin{proof}
Condition (\ref{cMVGexch}) guarantees that the  random vector $(X_1,X_2,\ldots,X_n)$ is indeed exchangeable. If this is the case, then
\begin{eqnarray*}
\theta_{all}&=&\prod\limits_{\emptyset\neq I\subset \{1,2,\ldots,n\}}\theta_I=
\prod_{s=1}^n\prod_{\{i_1,\ldots,i_{s}\}\subset\{1,\ldots,n\}}\theta_{\{i_1,\ldots,i_{s}\}} \\
&=&\prod_{s=1}^n\prod_{\{i_1,\ldots,i_{s}\}\subset\{1,\ldots,n\}}\theta_s=\prod_{s=1}^n\theta_s^{{n} \choose {s}}
\end{eqnarray*}
and, for $\{s_1,\ldots,s_{j}\}\subset\{1,\ldots,n\}$,
$$
\prod\limits_{\emptyset\neq I\subset \{s_1,\ldots,s_j\}}\theta_I=
\prod_{s=1}^j\theta_s^{{j} \choose {s}}=
\prod_{s=1}^n\theta_s^{{j} \choose {s}},
$$
by (\ref{konw}).
 Consequently, using notation of Theorem \ref{MVGmoments}, we have
\begin{equation}
\label{coMVGexch1}
S_{0,p}={{n} \choose {0}}\left(\frac{1}{1-\prod\limits_{s=1}^n\theta_s^{{n} \choose {s}}}-1  \right)^p
\end{equation}
and, for $1\leq j\leq n-1$,
\begin{eqnarray}
S_{j,p}&=&\sum_{\{s_1,\ldots,s_{j}\}\subset\{1,\ldots,n\}} \left(\frac{1}{1-\prod\limits_{s=1}^n\theta_s^{{{n} \choose {s}}-{{j} \choose {s}}}}-1  \right)^p  \nonumber  \\
&=&{{n} \choose {j}} \left(\frac{1}{1-\prod\limits_{s=1}^n\theta_s^{{{n} \choose {s}}-{{j} \choose {s}}}}-1  \right)^p.
\label{coMVGexch2}
\end{eqnarray}
Substituting (\ref{coMVGexch1}) and (\ref{coMVGexch2}) into (\ref{MVGfac}) completes the proof.
\end{proof}

We finish this section with Tables \ref{Table6} and \ref{Table7} presenting means and variances (in brackets) of all order statistics from some MVG samples of size $10$ which are non-exchangeable and exchangeable, respectively. They were obtained using Theorem \ref{MVGmoments} and Corollary \ref{exchMVGmoments}.

\begin{landscape}
\begin{table}[ht]
\footnotesize
\caption{Mean and variance (in brackets) of $X_{r:10}$ from $(X_1,\ldots,X_{10})$ with MVG distribution with parameters $\theta_I$, $\emptyset\neq I\subset\{1,\dots,10\}$ (not listed parameters are assumed to be equal to 1)}
\begin{tabular}{|l||c|c|c|c|c|c|c|c|c|c||}
\hline
$\theta_I\backslash r$ &1 & 2&3 &4 &5 &6 &7 &8 &9 &10\\ \hline\hline
$\theta_{\{1\}}=\cdots=\theta_{\{8\}}=0.9$ & 0.375 & 1.138  & 2.110  &3.239  &4.563  & 6.157 &8.149  &10.784  & 14.644 &  21.851\\
$\theta_{\{9\}}=\theta_{\{10\}}=0.8$& (0.516)  & (1.407)    & (2.456) &(3.876)  & (5.978)&(9.271)&(14.827)&(25.311)  &(49.390)  & (137.343) \\
$\theta_{\{1,\ldots,10\}}=0.99$&  &  &  &  &  &  &  &  &  &  \\\hline
$\theta_{\{1\}}=\cdots=\theta_{\{8\}}=0.9$ & 0.213 & 0.583 & 1.115 & 1.760 & 2.525 &3.450 &4.614  & 6.184 & 8.566 & 13.406 \\
$\theta_{\{9\}}=\theta_{\{10\}}=0.8$&        (0.258) & (0.681) & (1.194) &(1.787)&(2.546)&(3.623)&(5.287)&(8.202)&(14.656)  & (40.025) \\
$\theta_{\{i,j\}}=0.99, 1\leq i<j\leq 10$&  &  &  &  &  &  &  &  &  &  \\\hline
$\theta_{\{1\}}=\cdots=\theta_{\{8\}}=0.9$ & 0.336 & 0.997 &1.850  & 2.860 & 4.061 & 5.535 & 7.424 & 10.016 & 14.030 & 22.350 \\
$\theta_{\{9\}}=\theta_{\{10\}}=0.8$ & (0.449) & (1.221) & (2.121) & (3.258) & (4.849) & (7.238) & (11.153) & (18.502) & (35.978) &(109.293)\\
$\theta_{\{1,j\}}=0.99, 2\leq j\leq 10$&  &  &  &  &  &  &  &  &  &  \\\hline
$\theta_{\{1\}}=\cdots=\theta_{\{8\}}=0.9$ & 0.314 & 0.919 & 1.674 & 2.524 & 3.478 & 4.565 & 5.835 & 7.372 & 9.344 &  12.209\\
$\theta_{\{9\}}=\theta_{\{10\}}=0.8$& (0.413) & (1.118) & (1.960) & (3.088) & (4.779) & (7.469) & (11.993) & (20.185) & (36.868) & (80.375) \\
$\theta_{\{1,j\}}=0.99, 2\leq j\leq 10$&  &  &  &  &  &  &  &  &  &  \\
$\theta_{\{1,\ldots,10\}}=0.95$&  &  &  &  &  &  &  &  &  &  \\\hline
\end{tabular}
\label{Table6}
\end{table}

\begin{table}[ht]
\footnotesize
\caption{Mean and variance (in brackets) of $X_{r:10}$ from $(X_1,\ldots,X_{10})$ with exchangeable MVG distribution with parameters $\theta_s$, $s=1,2,\ldots,10$, defined in (\ref{cMVGexch}) (not listed parameters are assumed to be equal to 1)}
\begin{tabular}{|l||c|c|c|c|c|c|c|c|c|c||}
\hline
$\theta_s\backslash r$ &1 & 2&3 &4 &5 &6 &7 &8 &9 &10\\ \hline\hline
$\theta_{1}=0.9$& 0.285 & 0.705 & 1.303 & 2.008 & 2.839 & 3.835 &5.080  & 6.740 & 9.229 &14.208  \\
$\theta_{2}=0.99$& (0.366) & (0.885)&(1.499)&(2.208)&(3.101)&(4.338)&(6.197)&(9.366)&(16.184)&(42.216)  \\\hline
$\theta_{1}=0.9$& 0.036 &0.077  & 0.211 & 0.387 & 0.656 & 0.992 & 1.420 & 1.984 & 2.828 & 4.515 \\
$\theta_{2}=0.95$&(0.037)& (0.080) &(0.203)&(0.345)&(0.513)&(0.694)&(0.927)&(1.317)&(2.150)& (5.244)  \\\hline
$\theta_{1}=0.9$& 0.036 & 0.077 &0.209   &0.382  &0.648  &0.979  &1.398  &1.948  &2.764  & 4.367 \\
$\theta_{2}=0.95$&(0.037)&(0.080)&(0.201)&(0.341)&(0.509)&(0.690)&(0.926)&(1.324)& (2.177) & (5.293) \\
$\theta_{10}=0.99$&  &  &  &  &  &  &  &  &  &  \\\hline
$\theta_{2}=0.95$& 0.110 & 0.110 & 0.403 & 0.560 & 0.948 & 1.332 & 1.857 & 2.540 & 3.567 & 5.619 \\
                 & (0.123) & (0.123) & (0.398) & (0.546) & (0.791) & (1.065) & (1.425) & (2.040) & (3.312) & (7.967) \\\hline
$\theta_{8}=0.95$& 0.110 & 0.110 & 0.110 & 0.110 & 0.110 & 0.110 & 0.110 & 0.110 & 0.403 & 0.587 \\
                 & (0.123) & (0.123) & (0.123) & (0.123) & (0.123) & (0.123) & (0.123) & (0.123) & (0.398) & (0.592) \\\hline
\end{tabular}
\label{Table7}
\end{table}
\end{landscape}

%%%%%%%%%%%%%%%%%%%%%%%%%%%%%%%%%%%%%%%%%%%%%%%%%%%%%%
%%%%%%%%%%%%%%%%%%%%%%%%%%%%%%%%%%%%%%%%%%%%%%%%%%%%%%
\section{Moments of lifetimes of coherent systems}
\label{section4}

In this section we establish formulas that are  convenient for numerical evaluation of moments of lifetimes of coherent systems  operating in discrete time.

First observe that from results given in Sections \ref{section2} and \ref{section3} we immediately obtain  moments of lifetimes of two important types of technical structures, namely of $k$-out-of-$n:F$ and $k$-out-of-$n:G$ systems. To see this, let $X_1,X_2,\ldots,X_n$ be random lifetimes of  $n$ items and $T_{k,n:F}$  ($T_{k,n:G}$) denote the lifetime of the $k$-out-of-$n:F$  ($k$-out-of-$n:G$) system built up from these items. Since a $k$-out-of-$n:F$ system fails when at least $k$ of its components are broken, and $k$-out-of-$n:G$ system works as long as at least $k$ of its components are working we have, for any joint distribution of $(X_1,X_2,\ldots,X_n)$ and $1\leq k\leq n$,
$$
T_{k,n:F}=X_{k:n} \hbox{ and } T_{k,n:G}=X_{n-k+1:n}.
$$
Therefore using Theorem \ref{th1} (\ref{th2})
we can find moments (their approximate values) of $T_{k,n:F}$  and $T_{k,n:G}$  whenever $X_1,X_2,\ldots,X_n$ are discrete RV's taking values in finite (infinite) subsets of the set of non-negative integers. In particular, Corollaries \ref{th2c1} and \ref{th2c2} 
enable us to compute approximations of these moments when the univariate marginal distributions of $(X_1,X_2,\ldots,X_n)$ are Poisson or negative binomial. Theorem \ref{MVGmoments} along with Corollary \ref{exchMVGmoments}  give moments of $T_{k,n:F}$  and $T_{k,n:G}$ in the case when the joint distribution of component lifetimes is MVG. Furthermore, values in Tables \ref{Table1}-\ref{Table7}
can be interpreted as means, second raw moments and/or variances of $T_{k,n:F}$ (for $r=k$) and $T_{k,n:G}$ (for $r=n-k+1$).
\begin{example}
Water is supplied to a factory by $10$ pipes. Every morning of a working day valves are opened to provide water to the factory. There is one main valve which opens general water flow to all the pipes and $10$ valves which open water flow into separate pipes. The factory can work as long as at least $k$ ($1\leq k\leq 10$) pipes supply water. If the probability that the main valve will not break down during opening is $\theta_{\{1,\ldots,10\}}$, the probability that the valve of the $i$th pipe will not break down during opening is $\theta_{\{i\}}$, $i=1,\ldots,10$, all the valves work independently and are not repaired, then the system supplying water to the factory is a $k$-out-of-$10:G$  system with components having the MVG distribution with parameters $\theta_I$, $\emptyset\neq I\subset\{1,\dots,10\}$, where $\theta_I=1$ for $I\notin\{\{1\},\ldots,\{10\},\{1,\ldots,10\}\}$. Results of Section \ref{section3}  allow to find moments of the random number of working days up to a failure of this system. 
In particular, taking $r=n-k+1=11-k$ in the first row of Table \ref{Table6}, one obtains means and variances of this number for $k=1,\ldots,10$  when $\theta_{\{1\}}=\cdots=\theta_{\{8\}}=0.9$, $\theta_{\{9\}}=\theta_{\{10\}}=0.8$ and $\theta_{\{1,\ldots,10\}}=0.95$.
\end{example}

If we want to find moments of lifetimes of coherent systems other  than $k$-out-of-$n$ structures we need more effort. The rest of this section is denoted to presenting methods of doing this. We start with recalling relevant concepts and facts.

Let us consider a coherent system $S$ consisting of $n$ components numbered $1,2,\ldots,n$. We say that $P\subset\{1,2,\ldots,n\}$ is a path set of $S$ if system $S$  functions when all the elements with indices in $P$ work. Similarly, $C\subset\{1,2,\ldots,n\}$ is called a cut set of $S$ if system $S$ fails whenever all the elements with indices in $C$ break down. A path (cut) set is said to be minimal if it does not contain any strict subset being a path (cut) set.

Using the concepts of minimal path and cut sets we can write a useful representation for the lifetime of a coherent system. If the coherent system $S$ has $s$ minimal path sets $P_1,P_2,\ldots,P_s$ and $v$ minimal cut sets $C_1,C_2,\ldots,C_v$, then 
\begin{equation}
\label{repMPCs}
T=\max\limits_{1\leq j\leq s}\min\limits_{i\in P_j}X_i=\min\limits_{1\leq j\leq v}\max\limits_{i\in C_j}X_i,
\end{equation}
where $T$ denotes the lifetime of system $S$ and $X_i$ is the lifetime of its $i$th component, $i=1,\ldots,n$; see \cite[p 13]{BP75}. 
Applying (\ref{repMPCs}) and the inclusion-exclusion formula we get expressions for the survival function of T (see \cite[pp. 25-26]{BP75} and \cite{NRS07})
\begin{eqnarray}
\hspace{-4mm} P(T>t)&=&\sum\limits_{j=1}^s(-1)^{j+1}\sum\limits_{\{k_1,\ldots,k_j\}\subset\{1,\ldots, s\}}P\left(X_{1:\bigcup_{l=1}^jP_{k_l}}>t\right) \label{sf1} \\
&=&1-\sum\limits_{j=1}^v(-1)^{j+1}\sum\limits_{\{k_1,\ldots,k_j\}\subset\{1,\ldots, v\}}P\left(X_{\left|\bigcup_{l=1}^jC_{k_l}\right| :\bigcup_{l=1}^jC_{k_l}}\leq t\right) \nonumber \\
&=&  \sum\limits_{j=1}^v(-1)^{j+1}\sum\limits_{\{k_1,\ldots,k_j\}\subset\{1,\ldots, v\}}P\left(X_{\left|\bigcup_{l=1}^jC_{k_l}\right| :\bigcup_{l=1}^jC_{k_l}}> t\right), \label{sf2}
\end{eqnarray}
where,   for $A=\{i_1,\ldots,i_m\}\subset\{1,\ldots,n\}$, $1\leq m\leq n$, and $1\leq r\leq m$,
$X_{r:A}$ denotes the $r$th order statistic from $X_{i_1},\ldots,X_{i_m}$, and as in the previous section $|A|$ stands for the number of elements in $A$.
Under the assumption that the random vector $(X_1,X_2,\ldots,X_n)$ is exchangeable,  (\ref{sf1}) and (\ref{sf2}) simplify considerably. Indeed, then the distribution of $X_{1:\bigcup_{l=1}^jP_{k_l}}$ and $X_{\left|\bigcup_{l=1}^jC_{k_l}\right| :\bigcup_{l=1}^jC_{k_l}}$ depends only on the number of elements  in $\bigcup_{l=1}^jP_{k_l}$ and $\bigcup_{l=1}^jC_{k_l}$, respectively, and  we get
\begin{equation}
\label{sfexch}
P(T>t)=\sum_{i=1}^n\alpha_iP(X_{1:i}>t)=\sum_{i=1}^n\beta_iP(X_{i:i}>t),
\end{equation}
where $\alpha_i, \beta_i$, $i=1,2,\ldots,n$, are real numbers depending on the structure of the coherent system but not on the distribution of $(X_1,X_2,\ldots,X_n)$, and satisfying $\sum_{i=1}^n\alpha_i=\sum_{i=1}^n\beta_i=1$. The vectors $\mathbf{a}=(\alpha_1,\ldots,\alpha_n)$ and $\mathbf{b}=(\beta_1,\ldots,\beta_n)$ are called minimal and maximal signatures, respectively; see \cite{NRS07}.

Now we are ready to state and prove the main result of this section providing formulas for raw and factorial moments of lifetimes of  coherent systems  operating in discrete time. In the sequel, $\mathbb{I}(\cdot)$  denotes the indicator function, i.e., $\mathbb{I}(B)=1$ if $B$ is true and $\mathbb{I}(B)=0$ otherwise.
\begin{theorem}
\label{mCS}
Let $T$ be the lifetime of a coherent system with component lifetimes  $X_1,X_2,\ldots,X_n$ and with minimal path and cut sets $P_1,P_2,\ldots,P_s$ and  $C_1,C_2,\ldots,C_v$, respectively.
\begin{itemize}
\item[(i)]  If $X_i$ takes values in the set of non-negative integers, $i=1,2,\ldots,n$, then, for $p=1,2,\ldots$,
\begin{eqnarray}
ET^p&=&\sum\limits_{m=0}^\infty \big((m+1)^p-m^p\big)  \sum_{i=1}^{n} \sum\limits_{\{k_1,\ldots,k_i\}\subset\{1,\ldots, n\}}
\alpha_{\{k_1,\ldots,k_i\}} \nonumber  \\
&&\times P\left(\bigcap\limits_{l=1}^i\{X_{k_l}>m\}\right)  \label{rmCS1} \\
&=&\sum\limits_{m=0}^\infty \big((m+1)^p-m^p\big)  \sum_{i=1}^{n} \sum\limits_{\{k_1,\ldots,k_i\}\subset\{1,\ldots, n\}}
\beta_{\{k_1,\ldots,k_i\}} \nonumber  \\
&&\times \left\{1-P\left(\bigcap\limits_{l=1}^i\{X_{k_l}\leq m\}\right)\right\},  \label{rmCS1A}
\end{eqnarray}
where, for $\{k_1,\ldots,k_i\}\subset\{1,\ldots,n\}$,
\begin{equation}
\label{alfy}
\alpha_{\{k_1,\ldots,k_i\}}=\sum\limits_{j=1}^s(-1)^{j+1}\sum\limits_{\{s_1,\ldots,s_j\}\subset\{1,\ldots, s\}}\mathbb{I}\left(\bigcup_{l=1}^jP_{s_l}=\{k_1,\ldots,k_i\}\right) 
\end{equation}
and
\begin{equation*}
\beta_{\{k_1,\ldots,k_i\}}=\sum\limits_{j=1}^v(-1)^{j+1}\sum\limits_{\{s_1,\ldots,s_j\}\subset\{1,\ldots, v\}}\mathbb{I}\left(\bigcup_{l=1}^jC_{s_l}=\{k_1,\ldots,k_i\}\right) .
\end{equation*}
\item[(ii)] Under the hypothesis of part (i), if moreover we assume that $j_{max(m)}$ defined in (\ref{jmaxm}) does not depend on $m$, denote it by $j_0$ and require that for a fixed $p\in\{1,2,\ldots\}$, $EX_{j_0}^p<\infty$, then 
\begin{eqnarray}
ET^p&\approx&\sum\limits_{m=0}^{M_0} \big((m+1)^p-m^p\big)  \sum_{i=1}^{n} \sum\limits_{\{k_1,\ldots,k_i\}\subset\{1,\ldots, n\}}
\alpha_{\{k_1,\ldots,k_i\}} \nonumber  \\
&&\times P\left(\bigcap\limits_{l=1}^i\{X_{k_l}>m\}\right)  \label{rmCSapp}
\end{eqnarray}
and
\begin{eqnarray}
ET^p&\approx&\sum\limits_{m=0}^{\bar{M}_0} \big((m+1)^p-m^p\big)  \sum_{i=1}^{n} \sum\limits_{\{k_1,\ldots,k_i\}\subset\{1,\ldots, n\}}\beta_{\{k_1,\ldots,k_i\}} \nonumber  \\
&&\times \left\{1-P\left(\bigcap\limits_{l=1}^i\{X_{k_l}\leq m\}\right)\right\}
 \label{rmCSappA}
\end{eqnarray}
introduce an error not greater than $d>0$, provided that
\begin{equation}
\label{M0CS}
 \sum_{x=M_0+2}^{\infty}x^pP(X_{j_0}=x)\leq d \left(\sum_{i=1}^{n}\sum\limits_{\{k_1,\ldots,k_i\}\subset\{1,\ldots, n\}}\alpha_{\{k_1,\ldots,k_i\}}^+\right)^{-1}
\end{equation}
and
\begin{equation*}
 \sum_{x=\bar{M}_0+2}^{\infty}x^pP(X_{j_0}=x)\leq \frac{d}{2^n-1} \left(\sum_{i=1}^{n}\sum\limits_{\{k_1,\ldots,k_i\}\subset\{1,\ldots, n\}}\beta_{\{k_1,\ldots,k_i\}}^+\right)^{-1}
\end{equation*}
respectively, where
$$
\alpha_{\{k_1,\ldots,k_i\}}^+=
\left\{
\begin{array}{ll}
\alpha_{\{k_1,\ldots,k_i\}} & \hbox{if }  \;  \alpha_{\{k_1,\ldots,k_i\}}>0 \\
0  & \hbox{otherwise} 
\end{array}
\right.
$$
and 
$$
\beta_{\{k_1,\ldots,k_i\}}^+=
\left\{
\begin{array}{ll}
\beta_{\{k_1,\ldots,k_i\}} & \hbox{if }  \;  \beta_{\{k_1,\ldots,k_i\}}>0 \\
0  & \hbox{otherwise} 
\end{array}
\right..
$$
\item[(iii)] If $X_1,X_2,\ldots,X_n$ are non-negative RV's and for a fixed $p\in\{1,2,\ldots\}$ we have
\begin{equation}
\label{thmCSiii}
E\left(X_{1:\{k_1,\ldots,k_i\} }^p \right)<\infty \; \hbox{ for } \emptyset\neq \{k_1,\ldots,k_i\}\subset \{1,\ldots, n\}
\end{equation}
$$
\left( E\left(X_{i:\{k_1,\ldots,k_i\} }^p \right)<\infty \; \hbox{ for } \emptyset\neq \{k_1,\ldots,k_i\}\subset \{1,\ldots, n\} \right),
$$
then
\begin{equation}
ET^p = \sum_{i=1}^{n} \sum\limits_{\{k_1,\ldots,k_i\}\subset\{1,\ldots, n\}}\alpha_{\{k_1,\ldots,k_i\}}E\left(X_{1:\{k_1,\ldots,k_i\} }^p \right) 
 \label{rmCS2} 
\end{equation}
\begin{equation}
\left(ET^p  = \sum_{i=1}^{n} \sum\limits_{\{k_1,\ldots,k_i\}\subset\{1,\ldots, n\}}\beta_{\{k_1,\ldots,k_i\}}E\left(X_{i:\{k_1,\ldots,k_i\} }^p \right) \right)
 \label{rmCS2A}
\end{equation}
and
\begin{equation}
E(T)_p = \sum_{i=1}^{n} \sum\limits_{\{k_1,\ldots,k_i\}\subset\{1,\ldots, n\}}\alpha_{\{k_1,\ldots,k_i\}} 
 E\left(X_{1:\{k_1,\ldots,k_i\} }\right)_p  \label{fmCS} 
\end{equation}
\begin{equation}
\left( E(T)_p = \sum_{i=1}^{n} \sum\limits_{\{k_1,\ldots,k_i\}\subset\{1,\ldots, n\}}\beta_{\{k_1,\ldots,k_i\}}E\left(X_{i:\{k_1,\ldots,k_i\} } \right)_p   \right).
\label{fmCSA}
\end{equation}
\end{itemize}
\end{theorem}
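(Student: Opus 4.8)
The plan is to combine the survival-function representations (\ref{sf1}) and (\ref{sf2}) of the system lifetime $T$ with the moment identity (\ref{fact1_formula}), and then to reorganize the inclusion--exclusion sums by collecting together all terms whose associated index set coincides.

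First I would prove part (i). Applying (\ref{fact1_formula}) to the non-negative integer-valued RV $T$ gives $ET^p=\sum_{m=0}^\infty\big((m+1)^p-m^p\big)P(T>m)$, so everything reduces to rewriting $P(T>m)$. Starting from (\ref{sf1}) and noting that $\{X_{1:\bigcup_l P_{s_l}}>m\}=\bigcap_{k\in\bigcup_l P_{s_l}}\{X_k>m\}$ depends on the chosen path sets only through their union, I would group the double inclusion--exclusion sum according to the value of $\bigcup_{l=1}^j P_{s_l}$. For each nonempty $A=\{k_1,\ldots,k_i\}\subset\{1,\ldots,n\}$ the aggregated coefficient of $P\big(\bigcap_{l=1}^i\{X_{k_l}>m\}\big)$ is exactly $\alpha_{\{k_1,\ldots,k_i\}}$ as defined in (\ref{alfy}); index sets $A$ that are not a union of path sets receive coefficient $0$ automatically, so extending the summation to all nonempty subsets is harmless. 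This yields (\ref{rmCS1}). The companion identity (\ref{rmCS1A}) follows in the same way from (\ref{sf2}), after translating $\{X_{|D|:D}>m\}=\{\max_{k\in D}X_k>m\}$ into $1-P\big(\bigcap_{k\in D}\{X_k\leq m\}\big)$ and grouping by $D=\bigcup_l C_{s_l}$.

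For part (ii) I would imitate the truncation argument of Section \ref{section2}. The error committed by cutting the series at $M_0$ equals the non-negative tail $\sum_{m=M_0+1}^\infty\big((m+1)^p-m^p\big)P(T>m)$. Using the representation from part (i) and discarding the terms with negative coefficients, I would bound $P(T>m)\leq\big(\sum_A\alpha_A^+\big)P(X_{j_0}>m)$ in the path-set case, since each $P\big(\bigcap_{k\in A}\{X_k>m\}\big)\leq P(X_{j_0}>m)$; the telescoping estimate already established in Section \ref{section2}, namely $\sum_{m=M_0+1}^\infty\big((m+1)^p-m^p\big)P(X_{j_0}>m)\leq\sum_{x=M_0+2}^\infty x^pP(X_{j_0}=x)$, then shows that (\ref{M0CS}) forces the tail below $d$. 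The cut-set version is analogous but requires bounding a maximum-type probability $P\big(\bigcup_{k\in A}\{X_k>m\}\big)$ by a sum of marginal survival functions, each dominated by $P(X_{j_0}>m)$; keeping track of these multiplicities over all nonempty $A$ is what introduces the extra factor $2^n-1$. Controlling the signed coefficients through their positive parts $\alpha_A^+,\beta_A^+$ and producing the clean combinatorial constant is the step I expect to be most delicate.

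Finally, part (iii) does not assume integer values, so I would work with the integral form $ET^p=p\int_0^\infty x^{p-1}P(T>x)\,dx$ from (\ref{fact1_formula}). Substituting (\ref{sf1}) and interchanging the finite sum with the integral---legitimate once (\ref{thmCSiii}) guarantees each $E\big(X_{1:A}^p\big)<\infty$---gives (\ref{rmCS2}), and (\ref{sf2}) gives (\ref{rmCS2A}) identically. For the factorial moments I would use linearity: writing $(T)_p=\prod_{i=0}^{p-1}(T-i)=\sum_{q=1}^p c_q T^q$ with Stirling coefficients $c_q$ (the constant term vanishes), applying (\ref{rmCS2}) to each power $T^q$ with $q\leq p$ (finite because a finite $p$th moment of a non-negative RV controls all lower moments), and recombining the same coefficients $c_q$ inside the sum over $A$ produces $E(T)_p=\sum_A\alpha_A E(X_{1:A})_p$, which is (\ref{fmCS}); formula (\ref{fmCSA}) is the cut-set counterpart.
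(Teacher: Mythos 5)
Your proposal is correct and follows essentially the same route as the paper's proof: part (i) by grouping the inclusion--exclusion sums in (\ref{sf1})--(\ref{sf2}) according to the union of the chosen path (cut) sets and applying (\ref{fact1_formula}), part (ii) by the Section~\ref{section2} truncation argument with the positive parts $\alpha^+_{\{k_1,\ldots,k_i\}}$, $\beta^+_{\{k_1,\ldots,k_i\}}$ and the telescoping tail estimate, and part (iii) by integrating the grouped survival representation and passing to factorial moments via $(T)_p=\sum_{q=1}^p c_q T^q$. The only cosmetic differences are that in (ii) you bound $P(T>m)\leq\big(\sum_A\alpha_A^+\big)P(X_{j_0}>m)$ pointwise in $m$, which slightly streamlines the paper's interchange of the sums over $m$ and $A$ (justified there via Theorem~\ref{th2}), and that the factor $2^n-1$ in the cut-set condition is simply a safe upper bound for the at most $n$ marginal tails arising from the union bound, so your sketch of that case goes through.
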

\begin{proof}
We will show (\ref{rmCS1}), (\ref{rmCSapp}), (\ref{rmCS2}) and (\ref{fmCS}). The proof of (\ref{rmCS1A}), (\ref{rmCSappA}), (\ref{rmCS2A}) and (\ref{fmCSA}) goes along the same lines. 

From (\ref{sf1}) we have
\begin{eqnarray}
P(T>t) &=&\sum_{i=1}^{n} \sum\limits_{\{k_1,\ldots,k_i\}\subset\{1,\ldots, n\}} 
\sum\limits_{j=1}^s(-1)^{j+1}\sum\limits_{\{s_1,\ldots,s_j\}\subset\{1,\ldots, s\}} \nonumber \\ && \hspace{5mm} \mathbb{I}\left(\bigcup_{l=1}^jP_{s_l}=\{k_1,\ldots,k_i\}\right) 
P(X_{1:\{k_1,\ldots,k_i\}}>t) \nonumber  \\
&=&\sum_{i=1}^{n} \sum\limits_{\{k_1,\ldots,k_i\}\subset\{1,\ldots, n\}}\alpha_{\{k_1,\ldots,k_i\}} P(X_{1:\{k_1,\ldots,k_i\}}>t).
\label{dthmCSf1}
\end{eqnarray}

\noindent 
(i) Using (\ref{fact1_formula}) and next (\ref{dthmCSf1}) we get
\begin{eqnarray*}
ET^p&=&\sum\limits_{m=0}^\infty \big((m+1)^p-m^p\big) P(T>m) \\
&=&\sum\limits_{m=0}^\infty \big((m+1)^p-m^p\big)  \sum_{i=1}^{n} \sum\limits_{\{k_1,\ldots,k_i\}\subset\{1,\ldots, n\}}
\alpha_{\{k_1,\ldots,k_i\}} P(X_{1:\{k_1,\ldots,k_i\}}>m).
\end{eqnarray*}
But from (\ref{th1d2}) and (\ref{th1d4})  we see that 
\begin{equation}
\label{prawdop_Min}
 P(X_{1:\{k_1,\ldots,k_i\}}>m)= P\left(\bigcap\limits_{l=1}^i\{X_{k_l}>m\}\right),
\end{equation}
  and  (\ref{rmCS1}) is established.

\noindent 
(ii) The error of the approximation in (\ref{rmCSapp}) is given by 
\begin{eqnarray}
Error&=&\sum\limits_{m=M_0+1}^{\infty} \big((m+1)^p-m^p\big)  \sum_{i=1}^{n} \sum\limits_{\{k_1,\ldots,k_i\}\subset\{1,\ldots, n\}}
\alpha_{\{k_1,\ldots,k_i\}} \nonumber  \\
&&\times P\left(\bigcap\limits_{l=1}^i\{X_{k_l}>m\}\right).  \label{s4error}
\end{eqnarray}
From  (\ref{dthmCSf1}) - (\ref{s4error}) it follows that 
\begin{equation}
\label{dthmCSf3}
Error\geq 0.  
\end{equation}
On the other hand, Theorem \ref{th2} shows that the assumption $EX_{j_0}^p<\infty$ guarantees that (\ref{thmCSiii}) holds and hence that the infinite series 
\begin{eqnarray*}
&&\sum\limits_{m=M_0+1}^{\infty} \big((m+1)^p-m^p\big) P(X_{1:\{k_1,\ldots,k_i\}}>m) \\
&=&\sum\limits_{m=M_0+1}^{\infty} \big((m+1)^p-m^p\big)P\left(\bigcap\limits_{l=1}^i\{X_{k_l}>m\}\right)
\end{eqnarray*}
is convergent whenever $\emptyset\neq \{k_1,\ldots,k_i\}\subset \{1,\ldots, n\}$. This allows us to change the order of summation in (\ref{s4error}) to get
\begin{eqnarray}
Error&=& \sum_{i=1}^{n} \sum\limits_{\{k_1,\ldots,k_i\}\subset\{1,\ldots, n\}}\alpha_{\{k_1,\ldots,k_i\}} \sum\limits_{m=M_0+1}^{\infty} \big((m+1)^p-m^p\big)  \nonumber  \\
&&\times P(X_{1:\{k_1,\ldots,k_i\}}>m)  \nonumber \\
&\leq& \sum_{i=1}^{n} \sum\limits_{\{k_1,\ldots,k_i\}\subset\{1,\ldots, n\}}\alpha^+_{\{k_1,\ldots,k_i\}} \sum\limits_{m=M_0+1}^{\infty} \big((m+1)^p-m^p\big)  \nonumber  \\
&&\times P(X_{1:\{k_1,\ldots,k_i\}}>m). \label{dthmCSf4}
\end{eqnarray}
 Theorem \ref{th2} implies
\begin{eqnarray}
\sum\limits_{m=M_0+1}^{\infty} \big((m+1)^p-m^p\big)P(X_{1:\{k_1,\ldots,k_i\}}>m)  && \nonumber  \\
\leq  d\left( \sum_{i=1}^{n} \sum\limits_{\{k_1,\ldots,k_i\}\subset\{1,\ldots, n\}}\alpha^+_{\{k_1,\ldots,k_i\}} \right)^{-1}, && \label{dthmCSf5}
\end{eqnarray}
provided that (\ref{M0CS}) holds. Now (\ref{dthmCSf3}) -  (\ref{dthmCSf5}) show that
$$0\leq Error \leq d,$$
which means that (\ref{rmCSapp}) gives an accuracy not worse than $d$.

\noindent 
(iii) Since $T$ is a non-negative RV we have
$$ET^p=p\int_0^{\infty}x^{p-1}P(T>x)dx$$
and from (\ref{dthmCSf1}) we see that
\begin{eqnarray*}
ET^p&=&\sum_{i=1}^{n} \sum\limits_{\{k_1,\ldots,k_i\}\subset\{1,\ldots, n\}}\alpha_{\{k_1,\ldots,k_i\}}
p\int_0^{\infty}x^{p-1}P(X_{1:\{k_1,\ldots,k_i\}}>x)dx \\
&=&\sum_{i=1}^{n} \sum\limits_{\{k_1,\ldots,k_i\}\subset\{1,\ldots, n\}}\alpha_{\{k_1,\ldots,k_i\}} E\left(X^p_{1:\{k_1,\ldots,k_i\}}  \right),
\end{eqnarray*}
which is precisely (\ref{rmCS2}).

Condition (\ref{thmCSiii}) guarantees that $ET^p$ is finite. Hence so are $ET^j$, $j=1,\ldots,p-1$. Moreover, for some real numbers $c_j$, $j=1,\ldots,p$, we have
$$x(x-1)\cdots(x-(p-1))=\sum_{j=1}^{p}c_jx^j.$$
Consequently
\begin{eqnarray*}
E(T)_p &=&\sum_{j=1}^{p}c_jE\left(T^j\right) \\
&=&\sum_{j=1}^{p}c_j \sum_{i=1}^{n} \sum\limits_{\{k_1,\ldots,k_i\}\subset\{1,\ldots, n\}}\alpha_{\{k_1,\ldots,k_i\}} E\left(X^j_{1:\{k_1,\ldots,k_i\}}  \right) \\
&=& \sum_{i=1}^{n} \sum\limits_{\{k_1,\ldots,k_i\}\subset\{1,\ldots, n\}}\alpha_{\{k_1,\ldots,k_i\}}
E\left(\sum_{j=1}^{p}c_jX^j_{1:\{k_1,\ldots,k_i\}}  \right) \\
&=& \sum_{i=1}^{n} \sum\limits_{\{k_1,\ldots,k_i\}\subset\{1,\ldots, n\}}\alpha_{\{k_1,\ldots,k_i\}}
E\left(X_{1:\{k_1,\ldots,k_i\}}  \right)_p,
\end{eqnarray*}
and (\ref{fmCS}) is proved.
\end{proof}

If the random vector $(X_1,X_2,\ldots,X_n)$ is exchangeable, then, for any $t$ and $\emptyset\neq \{k_1,\ldots,k_i\}\subset \{1,\ldots, n\}$, we have
$$P(X_{1:\{k_1,\ldots,k_i\}}>t)=P(X_{1:i}>t).$$
Hence comparing (\ref{sfexch}) with (\ref{dthmCSf1}) and using (\ref{alfy}) we obtain the following formula for the  minimal signature. For $1\leq i\leq n$,
\begin{eqnarray}
\alpha_i & = & \sum\limits_{\{k_1,\ldots,k_i\}\subset\{1,\ldots, n\}}\alpha_{\{k_1,\ldots,k_i\}} \label{alfy_exch} \\
&=&\sum_{j=1}^{s}(-1)^{j+1} \sum\limits_{\{s_1,\ldots,s_j\}\subset\{1,\ldots, s\}}\mathbb{I}\left(\left|\bigcup_{l=1}^jP_{s_l}\right|=i \right).  \nonumber
\end{eqnarray}
Similarly, for $1\leq i\leq n$,
$$\beta_i=
\sum_{j=1}^{v}(-1)^{j+1} \sum\limits_{\{s_1,\ldots,s_j\}\subset\{1,\ldots, v\}}\mathbb{I}\left(\left|\bigcup_{l=1}^jC_{s_l}\right|=i \right). $$
Minimal and maximal signatures for all coherent systems with three and four exchangeable components are given in \cite{NRS07}, those for all coherent systems with five exchangeable components are tabulated in \cite{NR10} while whose for some special cases of consecutive-$k$-out-of-$n$ systems can be found in \cite{NE07} and \cite{E10}. In general computation of minimal and/or maximal signatures of complex systems is a demanding and time consuming task. In the literature techniques of finding Samaniego signatures are considerably better developed than those for minimal and maximal signatures; see, for example \cite{DZH12}, \cite{R17}, \cite{DCX18} and \cite{YC18}. Samaniego signature of a coherent system with lifetime $T$ and $n$ exchangeable components with lifetimes $X_1,X_2,\ldots,X_n$ is a vector $\mathbf{s}=(s_1,\ldots,s_n)$ such that 
$$P(T>t)=\sum_{i=1}^ns_iP(X_{i:n}>t) \; \hbox{ for any } t$$
and $\mathbf{s}$ depends only on the structure of the system and not on the distribution of $(X_1,X_2,\ldots,X_n)$. The existence of such a vector  $\mathbf{s}$ for any coherent system with exchangeable components was proved by Navarro et. al. \cite{NSBB08} who generalized earlier results by Samaniego \cite{S85} and Navarro and Rychlik \cite{NR07}. Minimal and maximal signatures can be easily determined from the corresponding Samaniego signature. Indeed, under the assumption that $(X_1,X_2,\ldots,X_n)$ is exchangeable (\ref{recc}) can be rewritten as
\begin{eqnarray*}
P(X_{r:n}>t) &=&\sum_{j=0}^{r-1} (-1)^{r-1-j} {{n-j-1} \choose {n-r}} {n \choose n-j}P(X_{1:n-j}>t) \\
&=&\sum_{i=n-r+1}^{n} (-1)^{r-1-n+i} {{i-1} \choose {n-r}} {n \choose i}P(X_{1:i}>t).
\end{eqnarray*}
It follows that 
\begin{eqnarray*}
\sum_{r=1}^{n}s_r P(X_{r:n}>t) &=&\sum_{r=1}^{n}s_r\sum_{i=n-r+1}^{n} (-1)^{r-1-n+i} {{i-1} \choose {n-r}} {n \choose i}P(X_{1:i}>t) \\
&=&\sum_{i=1}^{n}{n \choose i}\sum_{r=n-i+1}^{n}s_r (-1)^{r-1-n+i} {{i-1} \choose {n-r}} P(X_{1:i}>t),
\end{eqnarray*}
and hence that
\begin{equation}
\label{alfyzSamaniego}
\alpha_i={n \choose i}\sum_{r=n-i+1}^{n}s_r (-1)^{r-1-n+i} {{i-1} \choose {n-r}} , \; i=1,\ldots,n.
\end{equation}
Similarly, using \cite[formula (2.2)]{BBM92} we can derive formulas for $\beta_i$, $ i=1,\ldots,n$, in terms of Samaniego signature $(s_1,\ldots,s_n)$. 

Knowledge of minimal and/or maximal signatures of a coherent system simplifies computation of moments of the lifetime of this system in the case when its components are exchangeable. In this case Theorem \ref{mCS} specializes in the following result. 
\begin{corollary}
\label{mCSexch}
Let the random vector  $(X_1,X_2,\ldots,X_n)$ be exchangeable and write $T$ for the lifetime of a coherent system with component lifetimes $X_1,X_2,\ldots,X_n$ and with minimal and maximal signatures $(\alpha_1,\ldots,\alpha_n)$ and $(\beta_1,\ldots,\beta_n)$, respectively.
\begin{itemize}
\item[(i)]  If $X_i$ takes values in the set of non-negative integers, $i=1,2,\ldots,n$, then, for $p=1,2,\ldots$,
\begin{eqnarray*}
ET^p&=&\sum\limits_{m=0}^\infty \big((m+1)^p-m^p\big)  \sum_{i=1}^{n} \alpha_{i} 
 P\left(\bigcap\limits_{l=1}^i\{X_{l}>m\}\right)   \\
&=&\sum\limits_{m=0}^\infty \big((m+1)^p-m^p\big)  \sum_{i=1}^{n} \beta_{i}
 \left\{1-P\left(\bigcap\limits_{l=1}^i\{X_{l}\leq m\}\right)\right\}.
\end{eqnarray*}
Moreover, if $EX_1^p<\infty$ then the approximate formulas
$$ET^p\approx \sum\limits_{m=0}^{M_0} \big((m+1)^p-m^p\big)  \sum_{i=1}^{n} \alpha_{i} 
 P\left(\bigcap\limits_{l=1}^i\{X_{l}>m\}\right) $$
and 
$$ET^p\approx \sum\limits_{m=0}^{\bar{M}_0} \big((m+1)^p-m^p\big) \beta_{i}
 \left\{1-P\left(\bigcap\limits_{l=1}^i\{X_{l}\leq m\}\right)\right\}$$
introduce an error not greater than $d$, provided that
$$
 \sum_{x=M_0+2}^{\infty}x^pP(X_1=x)\leq d \left(\sum_{i=1}^{n}\alpha_{i}^+\right)^{-1}
$$
and
\begin{equation*}
 \sum_{x=\bar{M}_0+2}^{\infty}x^pP(X_{1}=x)\leq \frac{d}{2^n-1} \left(\sum_{i=1}^{n} \beta_{i}^+\right)^{-1}
\end{equation*}
respectively, where
$
\alpha_{i}^+=
\left\{
\begin{array}{ll}
\alpha_{i} & \hbox{if }  \;  \alpha_{i}>0 \\
0  & \hbox{otherwise} 
\end{array}
\right.
$
and 
$
\beta_{i}^+=
\left\{
\begin{array}{ll}
\beta_{i} & \hbox{if }  \;  \beta_{i}>0 \\
0  & \hbox{otherwise} 
\end{array}
\right..
$
\item[(ii)] If $X_1,X_2,\ldots,X_n$ are non-negative RV's and for a fixed $p\in\{1,2,\ldots\}$ we have $EX_1^p<\infty$, then 
$$ET^p=\sum_{i=1}^{n} \alpha_{i} E\left(X_{1:i}^p\right)=\sum_{i=1}^{n} \beta_{i} E\left(X_{i:i}^p\right)$$
and
\begin{eqnarray}
E(T)_p&=&\sum_{i=1}^{n} \alpha_{i} E\left(X_{1:i}\right)_p \label{fmCSexch} \\
&=& \sum_{i=1}^{n} \beta_{i} E\left(X_{i:i}\right)_p.  \nonumber
\end{eqnarray}
\end{itemize}
\end{corollary}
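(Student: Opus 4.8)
The plan is to obtain the corollary as a specialization of Theorem~\ref{mCS}, using exchangeability to collapse every inner sum over subsets $\{k_1,\dots,k_i\}$ of a fixed size $i$ into a single term weighted by an aggregated signature coefficient. First I would record the two facts that drive the collapse. By exchangeability and (\ref{prawdop_Min}), for every nonempty $S=\{k_1,\dots,k_i\}$ one has $P(X_{1:S}>m)=P\big(\bigcap_{l=1}^i\{X_l>m\}\big)$ and $1-P\big(\bigcap_{l\in S}\{X_l\le m\}\big)=P(X_{i:i}>m)$, quantities that depend on $S$ only through $|S|=i$; and by (\ref{alfy_exch}) and its $\beta$-analogue the minimal and maximal signatures satisfy $\alpha_i=\sum_{|S|=i}\alpha_{S}$ and $\beta_i=\sum_{|S|=i}\beta_{S}$.

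For the exact identities in part~(i) and for both the raw-moment and factorial-moment identities in part~(ii), I would start from (\ref{rmCS1}), (\ref{rmCS1A}), (\ref{rmCS2}), (\ref{rmCS2A}), (\ref{fmCS}) and (\ref{fmCSA}), pull the subset-independent probability (respectively moment) out of the inner sum $\sum_{|S|=i}$, and replace $\sum_{|S|=i}\alpha_{S}$ by $\alpha_i$ and $\sum_{|S|=i}\beta_{S}$ by $\beta_i$. To legitimately invoke Theorem~\ref{mCS}(iii) I must first verify its hypothesis (\ref{thmCSiii}): under exchangeability $X_{1:S}\le X_1$ gives $E X_{1:S}^p\le E X_1^p$, while $X_{i:i}\le\sum_{l=1}^iX_l$ together with the power-mean bound $\big(\sum_{l=1}^iX_l\big)^p\le i^{p-1}\sum_{l=1}^iX_l^p$ gives $E X_{i:i}^p\le i^pE X_1^p$, so $E X_1^p<\infty$ propagates to every order statistic appearing in the formulas. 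Likewise, since all $P(X_j>m)$ coincide, the index $j_{max(m)}$ of (\ref{jmaxm}) may be taken equal to $1$ for every $m$, whence the standing hypothesis of Theorem~\ref{mCS}(ii) reduces exactly to $EX_1^p<\infty$.

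The one step that is not a verbatim specialization, and the step I expect to be the main obstacle, is the error bound for the approximate formulas: the positive parts do not aggregate, since one only has $\alpha_i^+\le\sum_{|S|=i}\alpha_{S}^+$, so the constant in (\ref{M0CS}) is not the one asserted here and I cannot simply quote Theorem~\ref{mCS}(ii). Instead I would bound the truncation error directly from the exchangeable form. The error equals $\sum_{m>M_0}\big((m+1)^p-m^p\big)P(T>m)$, which is nonnegative; writing $P(T>m)=\sum_{i=1}^n\alpha_iP(X_{1:i}>m)$ and using $P(X_{1:i}>m)\le P(X_1>m)$ together with $\alpha_i\le\alpha_i^+$ bounds it by $\big(\sum_i\alpha_i^+\big)\sum_{m>M_0}\big((m+1)^p-m^p\big)P(X_1>m)$, and the inner tail is controlled by the interchange-of-summation estimate established in the derivation preceding (\ref{warM0}), namely $\sum_{m>M_0}\big((m+1)^p-m^p\big)P(X_1>m)\le\sum_{x>M_0+1}x^pP(X_1=x)$; this reproduces exactly the stated $\alpha$-condition. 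For the $\beta$-side I would argue identically, except that $P(X_{i:i}>m)$ is a maximum-type survival, so the union bound $P(X_{i:i}>m)\le iP(X_1>m)\le nP(X_1>m)$ introduces a multiplicative factor, and since $n\le 2^n-1$ the more restrictive constant $2^n-1$ printed in the corollary is a fortiori sufficient. Combining these estimates with the respective hypotheses on $M_0$ and $\bar M_0$ yields $0\le\mathrm{Error}\le d$ in both cases and completes the proof.
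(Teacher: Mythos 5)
Your proposal is correct, and it is in fact more than the paper offers: the paper states Corollary \ref{mCSexch} with no proof at all, presenting it as an immediate specialization of Theorem \ref{mCS}, so the only comparison available is against that implicit one-line argument. For the exact formulas in part (i) and the moment identities in part (ii), your route coincides with the intended one — collapse the subset sums via exchangeability, aggregate coefficients through (\ref{alfy_exch}) and its $\beta$-analogue, verify (\ref{thmCSiii}) from $EX_1^p<\infty$ (your bound $EX_{1:S}^p\le EX_{k_1}^p=EX_1^p$ is the right fix of the slightly sloppy ``$X_{1:S}\le X_1$'', since $1$ need not belong to $S$), and observe $j_{max(m)}=1$ in (\ref{jmaxm}). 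Where you genuinely add value is the truncation error bound: you correctly identified that the corollary is \emph{not} a verbatim consequence of Theorem \ref{mCS}(ii), because $\alpha_i^+=\big(\sum_{|S|=i}\alpha_S\big)^+\le\sum_{|S|=i}\alpha_S^+$ can be strict, so the corollary's condition with $\big(\sum_i\alpha_i^+\big)^{-1}$ is weaker than (\ref{M0CS}) and needs its own justification — a point the paper silently passes over. Your direct argument closes this gap cleanly: dropping negative signature terms \emph{after} the exchangeable collapse, using $P(X_{1:i}>m)\le P(X_1>m)$, and invoking the interchange-of-summation tail estimate from the derivation preceding (\ref{warM0}) yields exactly the stated $\alpha$-condition with error in $[0,d]$; on the $\beta$-side the union bound $P(X_{i:i}>m)\le iP(X_1>m)\le nP(X_1>m)$ together with $n\le 2^n-1$ shows the printed constant is a fortiori sufficient. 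In short, your proof is the specialization the authors intended, plus a necessary and correct repair of the error-constant step that a literal citation of Theorem \ref{mCS}(ii) would not deliver.
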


It should be noted that (\ref{rmCS2}) was presented in a slighty diffrent form by Navarro et. al. \cite[Corollary 5.1]{NRS07}. Here we gave it for completeness. Yet in the context of discrete lifetimes of compnents, (\ref{fmCS}) proves to be more useful than (\ref{rmCS2}). For example, (\ref{factorialMin}) and (\ref{fmCS}) together with its simplified version (\ref{fmCSexch}) valid for exchangeable components yield closed-form formulas describing single moments of times to failure of coherent systems with components having MVG lifetimes. 
\begin{corollary}
\label{mCSMVG}
Let the random vector  $(X_1,X_2,\ldots,X_n)$ have the MVG distribution with parameters $\theta_I\in [0,1]$, $\emptyset\neq I\subset \{1,2,\ldots,n\}$. Write $T$ for the lifetime of a coherent system with component lifetimes $X_1,X_2,\ldots,X_n$ and with   minimal path  sets $P_1,P_2,\ldots,P_s$. Then, for $p=1,2,\ldots$,
\begin{equation}
\label{fmCSMVG}
E(T)_p=p!\sum_{i=1}^{n} \sum\limits_{\{k_1,\ldots,k_i\}\subset\{1,\ldots, n\}}\alpha_{\{k_1,\ldots,k_i\}}
\left(\frac{1}{1-\frac{\theta_{all}}{\prod\limits_{\emptyset\neq I\subset \{1,\ldots,n\}\setminus\{k_1,\ldots,k_i\}}\theta_I}}-1\right)^p.
\end{equation}
where $\alpha_{\{k_1,\ldots,k_i\}}$ and $\theta_{all}$ are given in (\ref{alfy}) and (\ref{teta.all}), respectively. 

In particular, if the component lifetimes $X_1,X_2,\ldots,X_n$ are exchangeable with the MVG distribution with parameters $\theta_I\in [0,1]$, $\emptyset\neq I\subset \{1,2,\ldots,n\}$ satisfying (\ref{cMVGexch}), 
and $(\alpha_1,\ldots,\alpha_n)$ is the minimal signature of the coherent system, then (\ref{fmCSMVG}) simplifies to
\begin{equation}
\label{fmCSMVGexch}
E(T)_p=p!\sum_{i=1}^{n} \alpha_{i}\left(\frac{1}{1-\prod\limits_{j=1}^{n} \theta_j^{{n \choose j}-{n-i \choose j}}}-1\right)^p.
\end{equation}
\end{corollary}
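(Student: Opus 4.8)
The plan is to derive (\ref{fmCSMVG}) by combining the factorial-moment representation (\ref{fmCS}) for coherent-system lifetimes with the closed-form expression (\ref{factorialMin}) for the factorial moments of a minimum over a subset in the MVG model, and then to specialize the result to the exchangeable case.

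First I would apply Theorem \ref{MVG_properties} (iv) with $\{l_1,\ldots,l_s\}=\{k_1,\ldots,k_i\}$ to each subset $\{k_1,\ldots,k_i\}\subset\{1,\ldots,n\}$. By (\ref{factorialMin}) this gives $E(X_{1:\{k_1,\ldots,k_i\}})_p=p!\,\big(\theta/(1-\theta)\big)^p$ with $\theta=\prod_{I\subset\{1,\ldots,n\},\,I\cap\{k_1,\ldots,k_i\}\neq\emptyset}\theta_I$. Writing $\theta/(1-\theta)=1/(1-\theta)-1$ and invoking the multiplicative splitting already established in (\ref{thmMVG3}), namely $\theta=\theta_{all}/\prod_{\emptyset\neq I\subset\{1,\ldots,n\}\setminus\{k_1,\ldots,k_i\}}\theta_I$, converts the bracketed quantity into exactly the factor appearing in (\ref{fmCSMVG}). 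Substituting this into (\ref{fmCS}) and pulling the common $p!$ out of the double sum then yields (\ref{fmCSMVG}) at once.

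For the exchangeable specialization I would import the two product evaluations worked out in the proof of Corollary \ref{exchMVGmoments}: under (\ref{cMVGexch}) one has $\theta_{all}=\prod_{s=1}^n\theta_s^{\binom{n}{s}}$, while for the $(n-i)$-element set $\{1,\ldots,n\}\setminus\{k_1,\ldots,k_i\}$ one has $\prod_{\emptyset\neq I\subset\{1,\ldots,n\}\setminus\{k_1,\ldots,k_i\}}\theta_I=\prod_{s=1}^n\theta_s^{\binom{n-i}{s}}$, using convention (\ref{konw}). Dividing gives $\theta=\prod_{s=1}^n\theta_s^{\binom{n}{s}-\binom{n-i}{s}}$, which depends on the subset only through its cardinality $i$. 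Consequently the bracketed factor is constant across all $i$-element subsets, so the inner sum over $\{k_1,\ldots,k_i\}$ acts only on the coefficients, and by (\ref{alfy_exch}) it collapses to the minimal-signature entry $\alpha_i=\sum_{\{k_1,\ldots,k_i\}\subset\{1,\ldots,n\}}\alpha_{\{k_1,\ldots,k_i\}}$; relabeling the summation index then produces (\ref{fmCSMVGexch}).

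The argument is essentially bookkeeping, since every ingredient is already in hand. The one point deserving care is checking that the product over those $\theta_I$ with $I\cap\{k_1,\ldots,k_i\}\neq\emptyset$ factors cleanly against the product over $\emptyset\neq I\subset\{1,\ldots,n\}\setminus\{k_1,\ldots,k_i\}$, but this is precisely the identity (\ref{thmMVG3}), so no genuinely new difficulty arises. I expect the most error-prone step to be the index bookkeeping in the exchangeable case, where one must ensure via (\ref{konw}) that $\binom{n-i}{s}$ vanishes for $s>n-i$; this is routine but easy to get wrong.
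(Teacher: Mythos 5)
Your proposal is correct and follows essentially the same route as the paper, which obtains (\ref{fmCSMVG}) as an immediate consequence of (\ref{factorialMin}) substituted into (\ref{fmCS}), and the exchangeable case via (\ref{fmCSexch}) together with the product evaluations $\theta_{all}=\prod_{s=1}^n\theta_s^{\binom{n}{s}}$ and $\prod_{\emptyset\neq I\subset\{1,\ldots,n\}\setminus\{k_1,\ldots,k_i\}}\theta_I=\prod_{s=1}^n\theta_s^{\binom{n-i}{s}}$ from the proof of Corollary \ref{exchMVGmoments}. The only detail worth adding is a one-line check that hypothesis (\ref{thmCSiii}) of Theorem \ref{mCS}(iii) holds: by condition (d) of Definition \ref{MVG}, every nonempty subset $S$ meets some $I$ with $\theta_I<1$, so each $X_{1:S}$ is geometric with strictly positive parameter and has all moments finite.
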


We illustrate Corollary \ref{mCSMVG} with the following example.
\begin{example}
Let us consider the bridge system presented in Figure \ref{bridge} and assume that the joint distribution of its component lifetimes $X_1,X_2,\ldots,X_5$ is MVG with parameters $\theta_I\in [0,1]$, $\emptyset\neq I\subset \{1,2,\ldots,5\}$.
We will find  the expectation  $ET$ and variance $Var(T)$ of  the time to failure of this system.

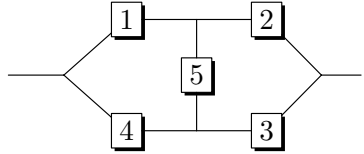
\begin{figure}[htb]
%\label{bridge}
\centering
\begin{tabular}{c}
$\xymatrix@R=8pt@C=15pt{
 & & *+[F-,]{1} \ar@{-}[r] & *[o]{}  \ar@{-}[r] & *+[F-,]{2} \ar@{-}[rd] \\
*[o]{} \ar@{-}[r] & *[o]{} \ar@{-}[ru] \ar@{-}[rd] & & *+[F-,]{5} \ar@{-}[u] \ar@{-}[d] & & *[o]{} \ar@{-}[r] & \\
 & & *+[F-,]{4} \ar@{-}[r] & *[o]{} \ar@{-}[r] & *+[F-,]{3} \ar@{-}[ru]}$
\end{tabular}
\caption{\small Bridge system}\label{bridge}
\end{figure}

There are four minimal path sets of this bridge system:
$$
P_1=\{1,2\}, P_2=\{3,4\}, P_3=\{1,3,5\}, P_4=\{2,4,5\}.
$$
Moreover,
$$P_1\cup P_2=\{1,2,\ldots,5\}\setminus\{5\},$$
$$P_1\cup P_3=\{1,2,\ldots,5\}\setminus\{4\},$$
$$P_1\cup P_4=\{1,2,\ldots,5\}\setminus\{3\},$$
$$P_2\cup P_3=\{1,2,\ldots,5\}\setminus\{2\},$$
$$P_2\cup P_4=\{1,2,\ldots,5\}\setminus\{1\},$$
$$P_3\cup P_4=\{1,2,\ldots,5\}$$
and
$$\bigcup_{l=1}^jP_{i_l}=\{1,2,\ldots,5\} \; \hbox{ if } 1\leq i_1<\cdots<i_j\leq 5 \hbox{ and } j\geq 3.$$
Hence
\begin{equation}
\label{bridge_alfy}
\alpha_{\{k_1,\ldots,k_i\}}=\left\{
\begin{array}{cl}
1 & \hbox{ if } \; \{k_1,\ldots,k_i\}\in\{P_1,P_2,P_3,P_4\} \\
-1 & \hbox{ if } \; \{k_1,\ldots,k_i\}\in \left\{\{1,2,\ldots,5\}\setminus\{i\}, i=1,2,\ldots,5\right\}\\
-1+{4 \choose 3}-1=2 & \hbox{ if } \; \{k_1,\ldots,k_i\}=\{1,2,\ldots,5\}\\
0 & \hbox{ otherwise}
\end{array},
\right.
\end{equation}
by (\ref{alfy}).  From (\ref{fmCSMVG}) we get, for $p=1,2,\ldots$,
\begin{eqnarray}
E(T)_p &=& p! \left\{\left(\frac{1}{1-\frac{\theta_{all}}{\theta_{\{3\}} \theta_{\{4\}}\theta_{\{5\}} \theta_{\{3,4\}} \theta_{\{3,5\}} \theta_{\{4,5\}} \theta_{\{3,4,5\}} }}-1\right)^p \right. \nonumber \\
&&+  \left(\frac{1}{1-\frac{\theta_{all}}{\theta_{\{1\}} \theta_{\{2\}}\theta_{\{5\}} \theta_{\{1,2\}} \theta_{\{1,5\}} \theta_{\{2,5\}} \theta_{\{1,2,5\}} }}-1\right)^p   \nonumber \\
&& + \left(\frac{1}{1-\frac{\theta_{all}}{\theta_{\{2\}} \theta_{\{4\}}\theta_{\{2,4\}}  }}-1\right)^p 
+ \left(\frac{1}{1-\frac{\theta_{all}}{\theta_{\{1\}} \theta_{\{3\}}\theta_{\{1,3\}}  }}-1\right)^p \nonumber \\
&&\left. -\sum_{i=1}^{5} \left(\frac{1}{1-\frac{\theta_{all}}{\theta_{\{i\}}  }}-1\right)^p
+2\left(\frac{1}{1-\theta_{all}}-1\right)^p \right\},  \label{fmbridge}
\end{eqnarray}
where $\theta_{all}=\prod_{\emptyset\neq I\subset \{1,2,\ldots,5\}}\theta_I$. Taking $p=1$ and $p=2$ we obtain $ET$ and $E(T(T-1))$, respectively. Then we can compute $Var(T)$ using the relation $Var(T)=E(T(T-1))+ET\left(1-ET\right)$. In Table \ref{Table8} we demonstrate values of $ET$ and $Var(T)$ obtained for the following selected settings of the parameters $\theta_I$, $\emptyset\neq I\subset \{1,2,\ldots,5\}$ ($\theta_I$ which are not listed are assumed to equal $1$):
\begin{enumerate}
\item[(1)] $\theta_{\{1\}}=0.9$, $\theta_{\{3\}}=0.8$, $\theta_{\{1,4,5\}}=\theta_{\{2,3,5\}}=0.99$;
\item[(2)] $\theta_{\{1\}}=\theta_{\{2\}}=0.9$, $\theta_{\{3\}}=\theta_{\{4\}}=\theta_{\{5\}}=0.8$, $\theta_{\{1,4,5\}}=\theta_{\{2,3,5\}}=0.99$;
\item[(3)] $\theta_{\{1\}}=\theta_{\{2\}}=0.9$, $\theta_{\{3\}}=\theta_{\{4\}}=\theta_{\{5\}}=0.8$;
\item[(4)] $\theta_{\{i\}}=0.9$, $i=1,2,\ldots,5$, $\theta_{\{i,j\}}=0.95$, $i,j\in\{1,2,\ldots,5\}$ and $i\neq j$;
\item[(5)] $\theta_{\{i\}}=0.9$, $i=1,2,\ldots,5$, $\theta_{\{i,j\}}=0.95$, $i,j\in\{1,2,\ldots,5\}$ and $i\neq j$, $\theta_{\{1,2,\ldots,5\}}=0.99$.
\end{enumerate}
Clearly (3) corresponds to the case when the RV's $X_1, X_2,\ldots,X_5$ are independent, while (4) and (5) to the case when the random vector $(X_1, X_2,\ldots,X_5)$ is exchangeable. 

%TABLE 8
\begin{table}[ht]
\footnotesize
\caption{Expectation and variance of $T$ for the bridge system when $(X_1,X_2,\ldots,X_5)$ has MVG distribution with parameters $\theta_I$, $\emptyset\neq I\subset\{1,2,3,4,5\}$ (not listed parameters are assumed to be equal to 1)}
\begin{tabular}{|c||c|c||}
\hline
$\theta_I$ & E$T$ & Var$(T)$\\ \hline\hline
$\theta_{\{1\}}=0.9$, $\theta_{\{3\}}=0.8$  &49.251& 2474.938\\
$\theta_{\{1,4,5\}}=\theta_{\{2,3,5\}}=0.99$ && \\ \hline
$\theta_{\{1\}}=\theta_{\{2\}}=0.9$, $\theta_{\{3\}}=\theta_{\{4\}}=\theta_{\{5\}}=0.8$ &4.751&16.996\\
$\theta_{\{1,4,5\}}=\theta_{\{2,3,5\}}=0.99$  && \\ \hline
$\theta_{\{1\}}=\theta_{\{2\}}=0.9$, $\theta_{\{3\}}=\theta_{\{4\}}=\theta_{\{5\}}=0.8$ &5.237&20.001\\\hline
$\theta_{\{i\}}=0.9$, $i=1,\ldots,5$, $\theta_{\{i,j\}}=0.95$, $i,j\in\{1,2,\ldots,5\}$ and $i\neq j$ &2.163&4.167\\ \hline
$\theta_{\{i\}}=0.9$, $i=1,\ldots,5$, $\theta_{\{i,j\}}=0.95$, $i,j\in\{1,2,\ldots,5\}$ and $i\neq j$ &2.109&4.034\\
$\theta_{\{1,2,\ldots,5\}}=0.99$ &&\\ \hline
\end{tabular}
\label{Table8}
\end{table}

In general, in the situation when  $X_1, X_2,\ldots,X_5$  are independent, that is when $\theta_I=1$ for all $\emptyset\neq I\subset \{1,2,\ldots,5\}$ except singletons, (\ref{fmbridge}) simplifies to 
\begin{eqnarray}
E(T)_p &=& p! \left\{\left(\frac{1}{1-\theta_{\{1\}} \theta_{\{2\}}}-1\right)^p 
+\left(\frac{1}{1-\theta_{\{3\}} \theta_{\{4\}}}-1\right)^p\right.   \nonumber \\
&&+ \left(\frac{1}{1-\theta_{\{1\}} \theta_{\{3\}} \theta_{\{5\}} }-1\right)^p 
+ \left(\frac{1}{1-\theta_{\{2\}} \theta_{\{4\}} \theta_{\{5\}} }-1\right)^p \nonumber \\
&&\left. -\sum_{i=1}^{5} \left(\frac{1}{1-\frac{\theta_{all}^{ind}}{\theta_{\{i\}}  }}-1\right)^p
+2\left(\frac{1}{1-\theta_{all}^{ind}}-1\right)^p\right\}, \label{fmbridge_ind}
\end{eqnarray}
where $\theta_{all}^{ind}=\prod_{i=1}^5\theta_{\{i\}}$.

If in turn $X_1, X_2,\ldots,X_5$  are exchangeable with
\begin{equation}
\label{thety_exch_bridge}
\theta_{\{i_1,\ldots,i_s\}}=\theta_s\in[0,1] \quad \hbox{ for all } 1\leq s\leq 5 \hbox{ and } 1\leq i_1<\cdots<i_s\leq 5,
\end{equation}
where at least one $\theta_s$, $1\leq s\leq 5$, is not equal to 1, then to compute $E(T)_p$ we can use directly (\ref{fmCSMVGexch}) but first we need to find the minimal signature $(\alpha_1,\alpha_2,\ldots,\alpha_5)$ for the bridge system. We can do this using one of the following methods:
\begin{enumerate}
\item substitute (\ref{bridge_alfy}) into (\ref{alfy_exch});
\item apply (\ref{alfyzSamaniego}) together with the known fact that the Samaniego signature $(s_1,s_2,\ldots,s_5)$ for the bridge system is equal to $(0,\frac{1}{5},\frac{3}{5},\frac{1}{5},0)$;
\item find the bridge system in Table 1 of \cite{NR10} (it is in the row numbered $N=93$) and then read its minimal signature from Table 2 of \cite{NR10}.
\end{enumerate}
The result is $(\alpha_1,\alpha_2,\ldots,\alpha_5)=(0,2,2,-5,2)$. Hence  (\ref{fmCSMVGexch}) gives
\begin{eqnarray}
E(T)_p &=& p! \left\{2\left(\frac{1}{1-\frac{\theta_{all}^{exch}}{\theta_1^3\theta_2^3\theta_3}}-1\right)^p 
+2\left(\frac{1}{1-\frac{\theta_{all}^{exch}}{\theta_1^2\theta_2}}-1\right)^p\right. \nonumber \\
&&\left. -5\left(\frac{1}{1-\frac{\theta_{all}^{exch}}{\theta_1}}-1\right)^p
+2\left(\frac{1}{1-\theta_{all}^{exch}}-1\right)^p \right\}, \label{fmbridge_exch}
\end{eqnarray}
where $\theta_{all}^{exch}=\prod_{j=1}^{5}\theta_j^{{5 \choose j}}$.
Of course (\ref{fmbridge_exch}) can be also  obtained  by substituting (\ref{thety_exch_bridge}) into (\ref{fmbridge}).

In the case when $X_1,X_2,\ldots,X_5$ are IID and geometrically distributed with parameter $\pi\in(0,1)$, both (\ref{fmbridge_ind}) and (\ref{fmbridge_exch}) lead to 
\begin{eqnarray*}
E(T)_p &=& p! \left\{2\left(\frac{1}{1-(1-\pi)^2}-1\right)^p 
+2\left(\frac{1}{1-(1-\pi)^3}-1\right)^p \right. \\
&&\left. -5\left(\frac{1}{1-(1-\pi)^4}-1\right)^p 
+2\left(\frac{1}{1-(1-\pi)^5}-1\right)^p  \right\}, 
\end{eqnarray*}
by taking
$$
\theta_{\{i_1,\ldots,i_s\}}=
\left\{
\begin{array}{ll}
\theta_1=1-\pi & \hbox{ if } \{i_1,\ldots,i_s\}=\{i\}, i=1,2,\ldots,5, \\
\theta_s=1 & \hbox{ otherwise.}
\end{array}
\right.
$$
In Figure \ref{wykresETVarT} we present the expectation $ET$ and variance $Var(T)$ as functions of $\pi\in(0,0.25)$ in the case when $X_1,X_2,\ldots,X_5$ are IID and $X_i$, $i=1,2,\ldots,5$, are geometrically distributed with parameter $\pi$.

\begin{figure}
\includegraphics[width=1\textwidth]{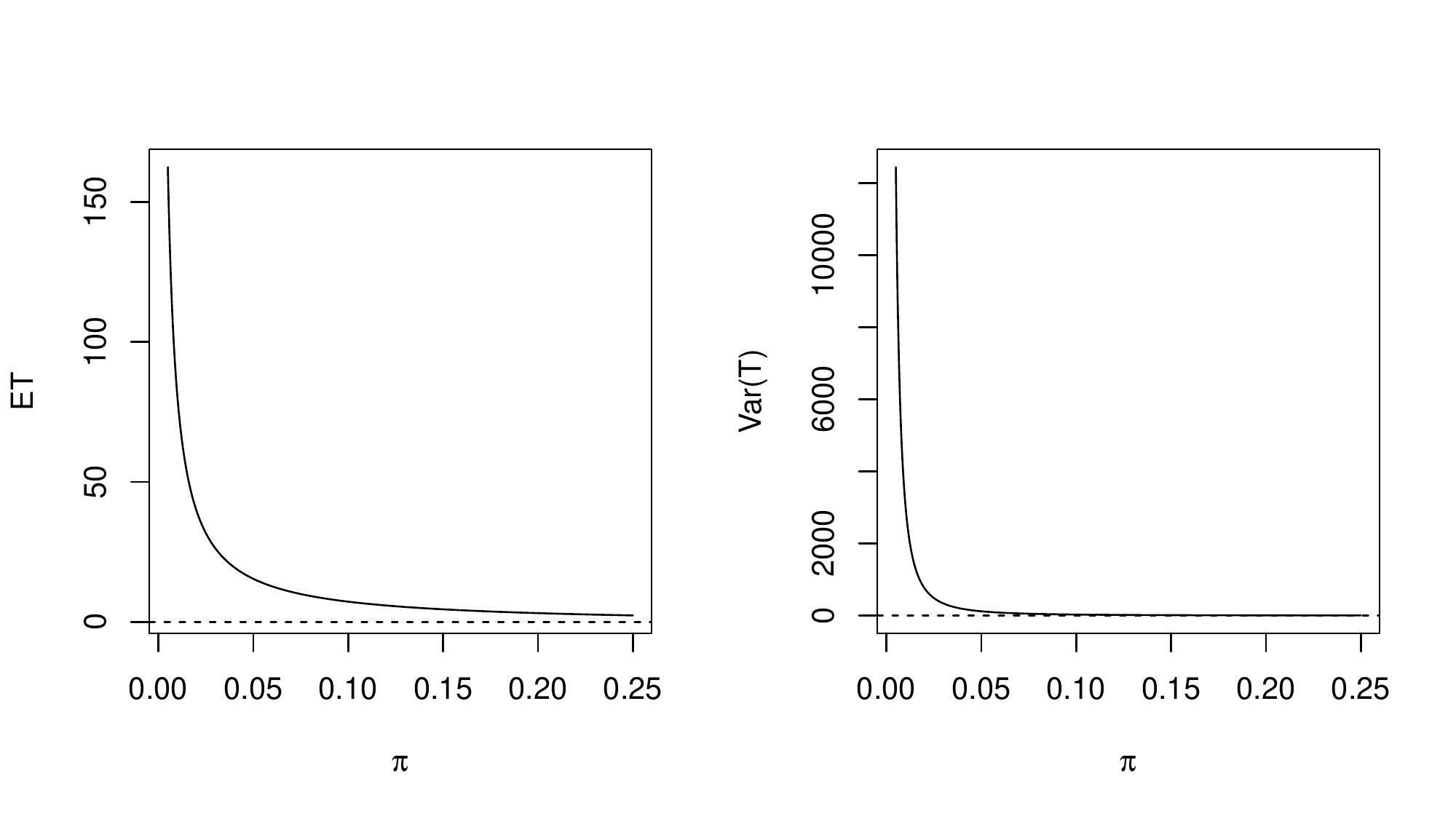}
\caption{Expectation $ET$ and variance $Var(T)$ as functions of $\pi\in(0,0.25)$ for  the bridge system when $X_1,X_2,\ldots,X_5$ are IID and $X_i\sim ge(\pi)$, $i=1,2,\ldots,5$}
\label{wykresETVarT}
\end{figure}

\end{example}

Before we present the next example let us mention a useful observation.
\begin{remark}
\label{Sec4Rem1}
If  $X_i\sim \mathrm{Pois}(\lambda_i)$, $\lambda_i>0$, $i=1,2,\ldots,n$, then the same analysis as that used in the proof of Corollary \ref{th2c1} shows that (\ref{M0CS}) is satisfied if
\begin{equation}
\label{M0Pois_system}
M_0=\left\{
\begin{array}{lcl}
p-2 & \hbox{if} &  \tilde{d}_{Pois}\leq 0 \\
F_{X_{j_0}}^{\leftarrow}(\tilde{d}_{Pois})+p-1  & \hbox{if} &  \tilde{d}_{Pois}\in(0,1)
\end{array}
\right.,
\end{equation}
where $ j_0=\argmax_{j=1,\ldots,n}\lambda_j$ and 
$$
\tilde{d}_{Pois}=1-d\, 2^{-p(p-1)/2}\lambda_{j_0}^{-p}  \left(\sum_{i=1}^{n}\sum\limits_{\{k_1,\ldots,k_i\}\subset\{1,\ldots, n\}}\alpha_{\{k_1,\ldots,k_i\}}^+\right)^{-1}.
$$
\end{remark}

\begin{example}
Let us again consider the bridge system presented in Figure~\ref{bridge} but now let  its component lifetimes $X_1,X_2,\ldots,X_5$ be independent and  $X_i\sim \mathrm{Pois}(\lambda_i)$, $\lambda_i>0$, $i=1,2,\ldots,5$.

Then using Theorem \ref{mCS} (ii), (\ref{bridge_alfy}) and Remark \ref{Sec4Rem1} we see that to compute $ET^p$, $p=1,2,\ldots$, with an error not greater than $d>0$ we can apply the approximate formula
\begin{eqnarray}
ET^p&\approx&\sum\limits_{m=0}^{M_0} \big((m+1)^p-m^p\big)  \sum_{i=1}^{n} \sum\limits_{\{k_1,\ldots,k_i\}\subset\{1,\ldots, n\}}
\alpha_{\{k_1,\ldots,k_i\}}  \prod_{l=1}^iP\left(X_{k_l}>m\right) \nonumber \\
&=&\sum\limits_{m=0}^{M_0} \big((m+1)^p-m^p\big) 
\bigg\{P(X_1>m)P(X_2>m)+P(X_3>m)P(X_4>m) \nonumber \\
&&+\left(\prod_{i=1}^5P(X_i>m)\right)\bigg(\frac{1}{P(X_2>m)P(X_4>m)}+\frac{1}{P(X_1>m)P(X_3>m)} \nonumber \\
&& -\sum_{j=1}^5\frac{1}{P(X_j>m)}+2\bigg)
\bigg\}, \label{bridge_Pois}
\end{eqnarray}
where  $M_0$ is given by (\ref{M0Pois_system}) with $ j_0=\argmax_{j=1,\ldots,5}\lambda_j$ and $\tilde{d}_{Pois}=1-d\, 2^{-p(p-1)/2}\lambda_{j_0}^{-p}6^{-1}$.

Moreover, if $X_1,X_2,\ldots,X_5$ are not only independent but also identically distributed, then (\ref{bridge_Pois}) reduces to 
\begin{eqnarray*}
ET^p&\approx&2\sum\limits_{m=0}^{M_0} \big((m+1)^p-m^p\big) \left(P(X_1>m)\right)^2 \bigg\{1+P(X_1>m) \\
&& -\frac{5}{2}\left(P(X_1>m)\right)^2+\left(P(X_1>m)\right)^3\bigg\}.
\end{eqnarray*}

For illustrative purposes we fixed the level of accuracy $d=0.0005$. Then in Table \ref{Table9}, for some selected values of $\lambda_1, \lambda_2,\ldots,\lambda_5$, we demonstrate expectations $ET$ and second raw moments $ET^2$ for the bridge system when $X_1,X_2,\ldots,X_5$ are  independent and $X_i\sim \mathrm{Pois}(\lambda_i)$, $i=1,2,\ldots,5$. 
Additionally in brackets we provide the corresponding values of $M_0$, that is  the numbers of terms in the sum sufficient to obtain the desired accuracy.
Next, under the stronger assumption that $X_1,X_2,\ldots,X_5$ are IID with $X_i\sim \mathrm{Pois}(\lambda)$, in Figure \ref{wykresETET2}, we present $ET$ and  $ET^2$ as functions of $\lambda\in(0,100)$. It is interesting that   the function $ET(\lambda)$ describing the dependence of $ET$ on $\lambda\in(1,100)$ is such that $ET(\lambda)\approx \lambda$. Moreover, numerical calculations show that this property holds also for $\lambda\geq100$, and $|ET(\lambda)-\lambda|$ decreases as $\lambda$ increases.

\bigskip
\begin{table}[ht]
\footnotesize
\caption{Expectation and second raw moment of $T$ for the bridge system when $X_1,X_2,\ldots,X_5$ are independent and $X_i\sim Pois(\lambda_i)$, $i=1,2,\ldots,5$}
\begin{tabular}{|c||c|c||}
\hline
$\lambda_i$ & E$T$ & E$T^2$\\ \hline\hline
$\lambda_i=1$, $i=1,2,\ldots,5$ &0.877&1.246\\
                                &(6)   &(8)\\\hline
$\lambda_i=i$, $i=1,2,\ldots,5$ &2.728&8.935\\
                                &(17)&(19)\\\hline
$\lambda_i=6-i$, $i=1,2,\ldots,5$ &3.458&13.980\\
                                  &(17)    &(19)\\\hline
$\lambda_1=\lambda_2=10$, $\lambda_3=\lambda_4=20$, $\lambda_5=50$ &17.600&321.251\\
                                                    &(86)&(95)\\\hline
$\lambda_1=\lambda_4=20$, $\lambda_2=50$, $\lambda_3=\lambda_5=10$&20.103&422.855\\
                 &(86)&(95)\\\hline
\end{tabular}
\label{Table9}
\end{table}

\begin{figure}
\includegraphics[width=1\textwidth]{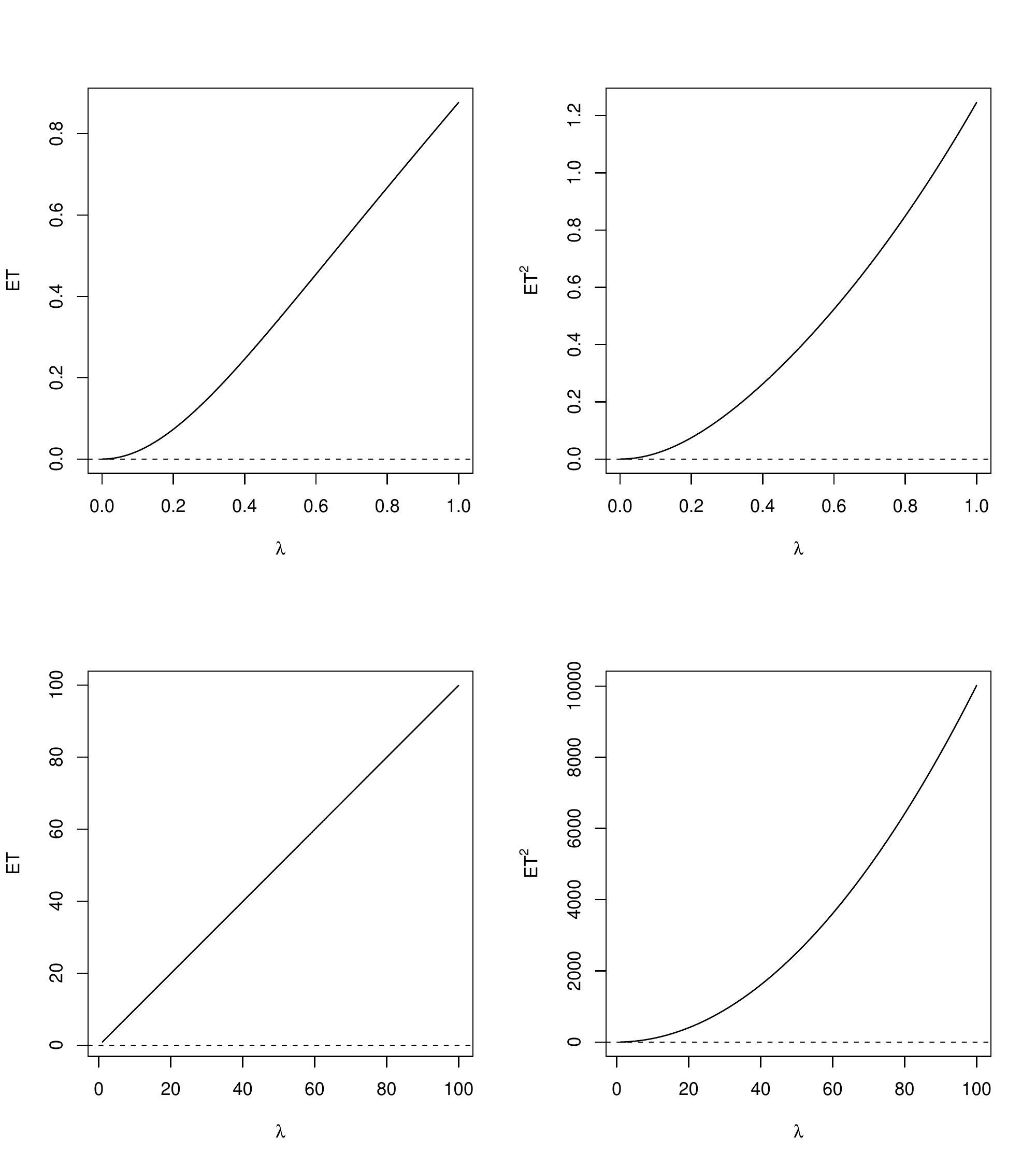}
\caption{Expectation $ET$ and second raw moment $ET^2$ as functions of $\lambda\in(0,100)$ for  the bridge system when $X_1,X_2,\ldots,X_5$ are IID and $X_i\sim \mathrm{Pois}(\lambda)$, $i=1,2,\ldots,5$}
\label{wykresETET2}
\end{figure}

\end{example}

%%%%%%%%%%%%%%%%%%%%%%%%%%%%%%%%%%%%%%%%%%%%%%%%%%%%%%%%
\section*{Acknowledgments}
The authors wish to thank Prof. Jorge Navarro for suggesting extension of formulas describing moments of lifetimes of $k$-out-of-$n$ systems to those valid for any coherent system. 

A. D. and A. G. acknowledge financial support for this research from Warsaw University of Technology under Grant no. 504/03910/1120 and Polish National Science Center under Grant no. 2015/19/B/ST1/03100, respectively.

%%%%%%%%%%%%%%%%%%%%%%%%%%%%%%%%%%%%%%%%%%%%%%%%%%%%%%%%
\section*{References}

\end{document}